\numberwithin{equation}{section}
\numberwithin{figure}{section}
\theoremstyle{plain}
\newtheorem{thm}{\protect\theoremname}
  \theoremstyle{plain}
  \newtheorem{prop}[thm]{\protect\propositionname}
  \theoremstyle{plain}
  \newtheorem{cor}[thm]{\protect\corollaryname}
  \theoremstyle{remark}
  \newtheorem{rem}[thm]{\protect\remarkname}
  \theoremstyle{plain}
  \newtheorem{lem}[thm]{\protect\lemmaname}
  \theoremstyle{remark}
  \newtheorem*{acknowledgement*}{\protect\acknowledgementname}
  \providecommand{\acknowledgementname}{Acknowledgement}
  \providecommand{\corollaryname}{Corollary}
  \providecommand{\lemmaname}{Lemma}
  \providecommand{\propositionname}{Proposition}
  \providecommand{\remarkname}{Remark}
\providecommand{\theoremname}{Theorem}
\begin{document}

\title{On Power Sums of Positive Numbers}

\author{Ruiming Zhang}
\begin{abstract}
In this work we establish a necessary and sufficient condition for
a genus $0$ entire function $f(z)$ has only positive zeros by applying
Hausdorff moment problem and Mergelyan's theorem, the obtained criterion
is very much reminiscent of Xian-Jin Li's criterion on the Riemann
hypothesis. We also apply this criterion to the Riemann hypothesis
and the generalized Riemann hypothesis for certain Dirichlet $L$-series
of real primitive characters. 
\end{abstract}

\subjclass[2000]{33C10; 33D15; 30E05; 30C15; 11M06; 11M20.}

\curraddr{College of Science\\
Northwest A\&F University\\
Yangling, Shaanxi 712100\\
P. R. China.}

\keywords{\noindent Hausdorff moment problem; Mergelyan's theorem; power sums;
Riemann zeta function; Riemann hypothesis; Dirichlet $L$-series;
generalized Riemann hypothesis; Li's criterion.}

\email{ruimingzhang@yahoo.com}

\maketitle

\section{Introduction }

In this work we are dealing with two problems related to the power
sums for a complex sequence. The first problem is to find a criterion
that a sequence is also positive, and the second problem is to determine
number theoretical properties of a power sum. The first problem is
directly related to the Riemann hypothesis of a zeta function associated
with a special function. We shall apply Hausdorff moment problem and
Mergelyan's theorem to prove a necessary and sufficient condition
for an absolutely summable sequence $\left\{ \lambda_{n}\right\} _{n=1}^{\infty}$
that is also a positive sequence. This criterion is first formulated
in terms of power sums $\sum_{n=1}^{\infty}\lambda_{n}^{k},\ k\in\mathbb{N}$,
then we restate it for an entire function of genus $0$. Given an
even entire function $g(z)$ of at most genus $1$, since the entire
function defined by $f(z)=g(\sqrt{z})$ is of genus $0$, then we
can apply this criterion to obtain necessary and sufficient conditions
for the Riemann hypothesis and generalized Riemann hypothesis of certain
$L$-series. The obtained criterions remind us of Xian-Jin Li's necessary
and sufficient conditions for the Riemann hypothesis\cite{Bombieri,Li}.
Since power sums of a sequence are often special values of a zeta
function, the solution to the second problem gives us an number theoretical
description for these special zeta values. We shall demonstrate that
the power sums of zeros for an entire (basic) hypergeometric function
of genus $0$ are in the field of rational functions in its parameters
over rationals, similar conclusion holds for an even power sum of
zeros for an even entire hypergeometric function of genus $1$.

\section{Preliminaries}

Given an entire function $f(z)={\displaystyle \sum_{n=0}^{\infty}}a_{n}z^{n}$,
its order can be computed by

\[
\rho=\left(1-\limsup_{n\to\infty}\frac{\log\left(|f^{(n)}(z_{0})|\right)}{n\log n}\right)^{-1},
\]
where $z_{0}\in\mathbb{C}$ is an arbitrary point and $f^{(n)}(z)$
is the $n$-th derivative of $f(z)$. The genus for an entire function
of finite order 

\[
f(z)=z^{m}e^{p(z)}\prod_{n=1}^{\infty}\left(1-\frac{z}{z_{n}}\right)\exp\left(\frac{z}{z_{n}}+\ldots+\frac{1}{k}\left(\frac{z}{z_{n}}\right)^{k}\right),
\]
is defined by the nonnegative integer $g=\max\{j,k\}$ where $\left\{ z_{n}\right\} _{n=1}^{\infty}$
all the nonzero roots of $f(z)$, $p(z)$ a polynomial of degree $j$,
and $k$ is the smallest nonnegative integer such that the series
${\displaystyle \sum_{n=1}^{\infty}}\frac{1}{|z_{n}|^{k+1}}$ converges.
If the order $\rho$ is not an integer, then $g$ is the integer part
of $\rho$, $g=\left\lfloor \rho\right\rfloor $. If the order is
a positive integer, then $g$ may be $\rho-1$ or $\rho$. Thus, an
order $0$ entire function also has genus $0$. 

The Riemann Xi function \cite{Csordas,Gasper 1,Gasper-2,Titchmarsh,Zhang}
\[
\Xi(z)=-\frac{(1+4z^{2})}{8\pi^{(1+2iz)/4}}\Gamma\left(\frac{1+2iz}{4}\right)\zeta\left(\frac{1+2iz}{2}\right)
\]
is an even entire function of genus $1$, it satisfies 

\[
\Xi(z)=\int_{-\infty}^{\infty}e^{-itz}\phi(t)dt
\]
and 
\begin{align*}
\Xi(z) & =\sum_{n=0}^{\infty}\frac{(-1)^{n}b_{2n}}{(2n)!}z^{2n},\quad b_{2n}=\int_{-\infty}^{\infty}t^{2n}\phi(t)dt,
\end{align*}
where
\begin{align*}
\phi(t) & =2\pi\sum_{n=1}^{\infty}\left\{ 2\pi n^{4}e^{-\frac{9t}{2}}-3n^{2}e^{-\frac{5t}{2}}\right\} \exp\left(-n^{2}\pi e^{-2t}\right).
\end{align*}
It is well known that $\phi(t)$ is even, positive, smooth and fast
decreasing on $\mathbb{R}$. Evidently, 
\begin{align*}
b_{2n} & >0,\quad\Xi(it)>0,\quad n\in\mathbb{N}_{0}\ t\in\mathbb{R}.
\end{align*}
It is known that all the zeros of $\Xi(z)$ are within the horizontal
strip $|\Im(z)|<\frac{1}{2}$, hence for each real number only finitely
many zeros of $\Xi(z)$ have it as their real part. If we list all
its zeros with positive real part, first according to sizes of their
real parts, then the absolute values of the imaginary parts \cite{Zhang},
\begin{equation}
z_{1},\ z_{2},\dots,\ z_{n},\dots.\label{eq:2.1}
\end{equation}
Then
\begin{equation}
\frac{\Xi(z)}{\Xi(0)}=\prod_{n=1}^{\infty}\left(1-\frac{z^{2}}{z_{n}^{2}}\right),\quad z\in\mathbb{C},\label{eq:2.2}
\end{equation}
which is essentially proved in section 2.8 of \cite{Edwards}. Hence,
\begin{equation}
\prod_{n=1}^{\infty}\left(1-\frac{z}{z_{n}^{2}}\right)=\sum_{n=0}^{\infty}\frac{b_{2n}(-1)^{n}}{(2n)!b_{0}}z^{n},\quad z\in\mathbb{C}\label{eq:2.3}
\end{equation}
defines an entire function such that 
\begin{align}
\prod_{n=1}^{\infty}\left(1-\frac{z}{z_{n}^{2}}\right) & =\frac{\Xi(\sqrt{z})}{\Xi(0)},\quad0\le\arg z<2\pi.\label{eq:2.4}
\end{align}
The Riemann hypothesis is equivalent to the statement that all the
numbers in \eqref{eq:2.1} are positive.

Given $m\in\mathbb{N}$, let $\chi(n)$ be a real primitive character
to modulus $m$ with parity $a$, \cite{Andrews2,Davenport,Titchmarsh,Zhang}
\begin{align*}
a & =\begin{cases}
0, & \chi(-1)=1\\
1, & \chi(-1)=-1
\end{cases}.
\end{align*}
It is known that the even entire function of genus $1$, \cite{Davenport,Edwards,Titchmarsh,Zhang}
\begin{align*}
\Xi(z,\chi) & =\left(\frac{\pi}{m}\right)^{-(1+2a+2iz)/4}\Gamma\left(\frac{1+2a+2iz}{4}\right)L\left(\frac{1+2iz}{2},\chi\right),
\end{align*}
has a Fourier integral representation \cite{Davenport,Edwards,Titchmarsh,Zhang},
\begin{equation}
\Xi(z,\chi)=\int_{-\infty}^{\infty}e^{-izt}\phi(t,\chi)dt,\label{eq:2.5}
\end{equation}
where
\begin{align*}
\phi(t,\chi) & =\sum_{n=-\infty}^{\infty}n^{a}\chi(n)\exp\left(-\frac{n^{2}\pi}{me^{2t}}-\frac{1+a}{2}t\right).
\end{align*}
The function $\phi(t,\chi)$ is clearly smooth and fast decreasing
on $\mathbb{R}$. By applying transformation formulas for a character
$\theta$ function, one can show that $\phi(t,\chi)$ is also even.

From \eqref{eq:2.5} $\Xi(z,\chi)$ has the power series expansion
\begin{align*}
\Xi(z,\chi) & =\sum_{n=0}^{\infty}\frac{(-1)^{n}b_{2n}(\chi)z^{2n}}{(2n)!},\quad b_{2n}(\chi)=\int_{-\infty}^{\infty}t^{2n}\phi(t,\chi)dt.
\end{align*}
In the following discussion we assume the condition 
\begin{equation}
\phi(t,\chi)\ge0,\quad t\in\mathbb{R}\label{eq:2.6}
\end{equation}
holds, so that 
\[
b_{2n}(\chi)>0,\quad\Xi(it,\chi)>0,\quad n\in\mathbb{N}_{0},\ t\in\mathbb{R}.
\]
It is also known that all the zeros of $\Xi(z,\chi)$ are within the
horizontal strip $|\Im(z)|<\frac{1}{2}$, hence for each real number
only finitely many zeros of $\Xi(z,\chi)$ have this number as their
real part. If we list all the zeros with positive real parts, first
according to the sizes of their real parts, then the absolute values
of the imaginary parts, 
\begin{equation}
z_{1}(\chi),z_{2}(\chi),\dots,z_{n}(\chi),\dots.\label{eq:2.7}
\end{equation}
Then from the theory developed in \cite{Davenport,Edwards,Titchmarsh}
we have
\begin{equation}
\frac{\Xi(z,\chi)}{\Xi(0,\chi)}=\prod_{n=1}^{\infty}\left(1-\frac{z^{2}}{z_{n}(\chi)^{2}}\right).\label{eq:2.8}
\end{equation}
Thus, 
\begin{align}
\prod_{n=1}^{\infty}\left(1-\frac{z}{z_{n}(\chi)^{2}}\right) & =\sum_{n=0}^{\infty}\frac{b_{2n}(\chi)(-1)^{n}}{(2n)!b_{0}(\chi)}z^{n}\label{eq:2.9}
\end{align}
defines an entire function such that
\begin{equation}
\prod_{n=1}^{\infty}\left(1-\frac{z}{z_{n}(\chi)^{2}}\right)=\frac{\Xi(\sqrt{z},\chi)}{\Xi(0,\chi)},\quad0\le\arg(z)<2\pi.\label{eq:2.10}
\end{equation}
The generalized Riemann hypothesis for $L(s,\chi)$ is equivalent
to the statement that all numbers in \eqref{eq:2.7} are positive.

From the Euler's infinite product expansion $\frac{\sin\pi z}{\pi z}=\prod_{n=1}^{\infty}\left(1-\frac{z^{2}}{n^{2}}\right)$
we know that 
\begin{equation}
f(z)=\prod_{n=1}^{\infty}\left(1-\frac{z}{n^{2}}\right)=\sum_{n=0}^{\infty}\frac{\pi^{2n}(-z)^{n}}{(2n+1)!}\label{eq:2.11}
\end{equation}
 defines an entire function of genus $0$ with only positive zeros.

For $\nu>-1$, let \cite{Andrews2,Ismail,Watson,Zhang} 
\begin{align*}
0 & <j_{\nu,1}<j_{\nu,2}<\dots<j_{\nu,n},\dots
\end{align*}
be all the positive zeros of the Bessel function
\begin{align*}
J_{\nu}(z) & =\sum_{n=0}^{\infty}\frac{(-1)^{n}}{\Gamma(n+\nu+1)n!}\left(\frac{z}{2}\right)^{\nu+2n}.
\end{align*}
Then $z^{-\nu}J_{\nu}(z)$ is an even entire function of genus $1$
such that 
\[
\Gamma(\nu+1)J_{\nu}(z)\left(\frac{2}{z}\right)^{\nu}=\prod_{n=1}^{\infty}\left(1-\frac{z^{2}}{j_{\nu,n}^{2}}\right).
\]
Then the entire function defined by 
\begin{equation}
\prod_{n=1}^{\infty}\left(1-\frac{z}{j_{\nu,n}^{2}}\right)=\sum_{n=0}^{\infty}\frac{(-1)^{n}z^{n}}{n!2^{2n}(\nu+1)_{n}}\label{eq:2.12}
\end{equation}
satisfies
\begin{equation}
\prod_{n=1}^{\infty}\left(1-\frac{z}{j_{\nu,n}^{2}}\right)=\frac{2^{\nu}\Gamma(\nu+1)J_{\nu}(z^{1/2})}{z^{\nu/2}},\quad0\le\arg(z)<2\pi,\label{eq:2.13}
\end{equation}
where 
\[
(a)_{n}=\frac{\Gamma(a+n)}{\Gamma(a)},\quad a,n\in\mathbb{C},\ a+n\not\in-\mathbb{N}_{0}.
\]
The modified Bessel function \cite{Andrews2,Gasper-2,Ismail,Watson}
\begin{equation}
K_{iz}(a)=\int_{0}^{\infty}e^{-a\cosh u}\cos(zu)du,\quad a>0\label{eq:2.14}
\end{equation}
is an entire function in variable $z$. It is known that for $a>0$,
$K_{iz}(a)$ is an even entire function of genus $1$ in variable
$z$ that has only real zeros. 

Let 
\begin{align*}
0 & <i_{1}<i_{2}<\dots
\end{align*}
be the zeros of the Airy function \cite{Andrews2,Ismail,Zhang} 
\begin{align*}
A(z) & =\frac{\pi}{\sqrt[3]{3}}\mathrm{Ai}\left(-\frac{z}{\sqrt[3]{3}}\right)=\frac{\pi}{3}\sqrt{\frac{x}{3}}\left\{ J_{-\frac{1}{3}}\left(2\left(\frac{x}{3}\right)^{\frac{3}{2}}\right)+J_{\frac{1}{3}}\left(2\left(\frac{x}{3}\right)^{\frac{3}{2}}\right)\right\} .
\end{align*}
Then it is an entire function of genus $1$ that has only positive
zeros, and it satisfies \cite{Zhang}
\begin{equation}
\prod_{n=1}^{\infty}\left(1-\frac{z^{2}}{i_{n}^{2}}\right)=\frac{9\Gamma(2/3)^{2}A(z)A(-z)}{\pi^{2}}.\label{eq:2.15}
\end{equation}
Then the entire function defined by 
\begin{equation}
\prod_{n=1}^{\infty}\left(1-\frac{z}{i_{n}^{2}}\right)=\sum_{n=0}^{\infty}(-1)^{n}a_{n}z^{n}\label{eq:2.16}
\end{equation}
satisfies 
\begin{equation}
\prod_{n=1}^{\infty}\left(1-\frac{z}{i_{n}^{2}}\right)=\frac{9\Gamma(2/3)^{2}A(\sqrt{z})A(-\sqrt{z})}{\pi^{2}},\quad0\le\arg(z)<2\pi,\label{eq:2.17}
\end{equation}
where
\begin{align}
a_{n} & =\frac{\sqrt{3}\Gamma\left(\frac{2}{3}\right)^{2}}{\sqrt[3]{4}\pi}\frac{16^{\frac{n}{3}}\Gamma\left(\frac{n}{3}+\frac{1}{6}\right)\Gamma\left(\frac{n}{3}+\frac{1}{2}\right)}{(2n)!}.\label{eq:2.18}
\end{align}
Assume that $0<q<1$, let \cite{Andrews2,Ismail,Zhang}
\begin{align*}
(z;q)_{\infty} & =\prod_{n=0}^{\infty}(1-zq^{n}),\ (z;q)_{n}=\frac{(z;q)_{\infty}}{(zq^{n};q)_{\infty}}
\end{align*}
and
\begin{align*}
(z_{1},z_{2},\dots,z_{m};q)_{n} & =\prod_{j=1}^{m}(z_{j};q)_{n}
\end{align*}
for all $m\in\mathbb{N}$,$n\in\mathbb{Z}$ and $z,z_{1},z_{2},\dots,z_{m}\in\mathbb{C}$. 

For $\nu>-1$, let 
\begin{align*}
0 & <j_{\nu,1}(q)<j_{\nu,2}(q)<\dots<j_{\nu,n}(q)<\dots
\end{align*}
be all the positive zeros of the $q$-Bessel function \cite{Andrews2,Ismail}

\begin{align*}
J_{\nu}^{(2)}(z;q) & =\frac{(q^{\nu+1};q)_{\infty}}{(q;q)_{\infty}}\sum_{n=0}^{\infty}\frac{\left(-q^{n+\nu}\right)^{n}}{(q,q^{\nu+1};q)_{n}}\left(\frac{z}{2}\right)^{\nu+2n}.
\end{align*}
Then the genus $0$ even entire function $\left(\frac{2}{z}\right)^{\nu}J_{\nu}^{(2)}(z;q)$
satisfies
\[
\left(\frac{2}{z}\right)^{\nu}J_{\nu}^{(2)}(z;q)=\frac{(q^{\nu+1};q)_{\infty}}{(q;q)_{\infty}}\prod_{n=1}^{\infty}\left(1-\frac{z^{2}}{j_{\nu,n}^{2}(q)}\right).
\]
Thus the entire function defined by 
\begin{align}
\prod_{n=1}^{\infty}\left(1-\frac{z}{j_{\nu,n}^{2}(q)}\right) & =\sum_{n=0}^{\infty}\frac{(-1)^{n}q^{n(n+\nu)}z^{n}}{(q,q^{\nu+1};q)_{n}2^{2n}}\label{eq:2.19}
\end{align}
satisfies 
\begin{equation}
\prod_{n=1}^{\infty}\left(1-\frac{z}{j_{\nu,n}^{2}(q)}\right)=\frac{2^{\nu}(q;q)_{\infty}J_{\nu}^{(2)}(z^{1/2};q)}{(q^{\nu+1};q)_{\infty}z^{\nu/2}},\quad0\le\arg z<2\pi.\label{eq:2.20}
\end{equation}
Let 
\begin{align*}
0 & <i_{1}(q)<i_{2}(q)<\dots<i_{n}(q)<\dots
\end{align*}
be all the zeros of the Ramanujan's entire function\cite{Andrews1,Andrews2,Ismail},
then 
\begin{align}
A_{q}(z) & =\sum_{n=0}^{\infty}\frac{q^{n^{2}}(-z)^{n}}{(q;q)_{n}}=\prod_{n=1}^{\infty}\left(1-\frac{z}{i_{n}(q)}\right)\label{eq:2.21}
\end{align}
is an entire function of genus $0$ with only positive zeros.

\section{Main Results}

A convenient proposition to determine the number theoretical properties
of certain power sums:
\begin{prop}
\label{prop:3.1} Let ${\displaystyle \sum_{n=1}^{\infty}|}\lambda_{n}|<\infty$,
$p_{k}={\displaystyle \sum_{n=1}^{\infty}}\lambda_{n}^{k}$, and 
\begin{equation}
f(z)=\prod_{n=1}^{\infty}\left(1-\lambda_{n}z\right)=\sum_{k=0}^{\infty}(-1)^{k}e_{k}z^{k},\quad z\in\mathbb{C}.\label{eq:3.1}
\end{equation}
If $\left\{ e_{k}\right\} _{n=0}^{\infty}\subset\mathbb{Q}$, then
$\left\{ p_{k}\right\} _{n=1}^{\infty}\subset\mathbb{Q}$. Let $\ell\in\mathbb{N}$
and $\mathbb{Q}(q_{1},\dots,q_{\ell})$ be the field of rational functions
in variables $q_{1},q_{2},\dots q_{\ell}$. If $\left\{ e_{k}\right\} _{n=0}^{\infty}\subset\mathbb{Q}(q_{1},\dots,q_{\ell})$,
then $\left\{ p_{k}\right\} _{n=1}^{\infty}\subset\mathbb{Q}(q_{1},\dots,q_{\ell})$.
In particular, let $f(z)$ be a (basic) hypergeometric type entire
function of genus $0$, then $\left\{ p_{k}\right\} _{n=1}^{\infty}\subset\mathbb{Q}((q),\ a_{1},\dots,a_{r},\ b_{1},\dots b_{s})$,
where $(q),\ a_{1},\dots,a_{r},\ b_{1},\dots b_{s}$ are the parameters,
and $(q)$ means that it appears only when $f(z)$ is a $q$-series.
Here are some examples:
\begin{enumerate}
\item for $n\in\mathbb{N}$, $\zeta(2n)\in\mathbb{Q}(\pi^{2})$.
\item for $\nu>-1,\ n\in\mathbb{N}$, ${\displaystyle \sum_{k=1}^{\infty}}\frac{1}{j_{\nu,k}^{2n}}\in\mathbb{Q}(\nu)$.
\item for $n\in\mathbb{N}$, ${\displaystyle \sum_{k=1}^{\infty}}\frac{1}{i_{k}^{2n}}\in\mathbb{Q}\left(\sqrt{3},\pi,\Gamma\left(\frac{1}{3}\right)\right)$.
\item for $\nu>-1,\,0<q<1,\,n\in\mathbb{N}$, ${\displaystyle \sum_{k=1}^{\infty}}\frac{1}{j_{\nu,k}^{2n}(q)}\in\mathbb{Q}(q,\ q^{\nu})$. 
\item for $0<q<1,\,n\in\mathbb{N}$,${\displaystyle \sum_{k=1}^{\infty}}\frac{1}{i_{k}^{n}(q)}\in\mathbb{Q}(q)$. 
\item for $n\in\mathbb{N}$, ${\displaystyle \sum_{k=1}^{\infty}\frac{1}{z_{k}^{2n}}\in\mathbb{Q}\left(\frac{b_{2}}{b_{0}},\dots,\frac{b_{2n}}{b_{0}}\right)}$. 
\item for $n\in\mathbb{N}$, ${\displaystyle \sum_{k=1}^{\infty}\frac{1}{z_{k}(\chi)^{2n}}\in\mathbb{Q}\left(\frac{b_{2}(\chi)}{b_{0}(\chi)},\dots,\frac{b_{2n}(\chi)}{b_{0}(\chi)}\right)}$.
\end{enumerate}
\end{prop}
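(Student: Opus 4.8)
The plan is to reduce the entire statement to the Newton--Girard identities relating the power sums $p_{k}$ to the coefficients $e_{k}$, and then to propagate membership in a field by induction. First I would record the convergence facts that make all the formal manipulations legitimate. Since $\sum_{n}|\lambda_{n}|<\infty$ forces $\lambda_{n}\to0$, one has $|\lambda_{n}|^{k}\le|\lambda_{n}|$ for all large $n$ and every $k\ge1$, so each series $p_{k}=\sum_{n}\lambda_{n}^{k}$ converges absolutely, and the product $f(z)=\prod_{n}(1-\lambda_{n}z)$ is a genus $0$ entire function whose Taylor series $\sum_{k}(-1)^{k}e_{k}z^{k}$ (with $e_{0}=1$) converges on all of $\mathbb{C}$. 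On the disc $|z|<1/\sup_{n}|\lambda_{n}|$ no factor of the product vanishes, so I may take the logarithmic derivative there and, using $\log(1-\lambda_{n}z)=-\sum_{k\ge1}\lambda_{n}^{k}z^{k}/k$ together with absolute convergence to interchange the two summations, obtain the generating identity
\[
\frac{f'(z)}{f(z)}=-\sum_{k=1}^{\infty}p_{k}z^{k-1}.
\]

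Next I would convert this into a recursion. Writing $f'(z)=-f(z)\sum_{k\ge1}p_{k}z^{k-1}$ and comparing the coefficient of $z^{k-1}$ on both sides, with $f'(z)=\sum_{k\ge1}(-1)^{k}ke_{k}z^{k-1}$, yields the Newton identities
\[
p_{k}=e_{1}p_{k-1}-e_{2}p_{k-2}+\dots+(-1)^{k}e_{k-1}p_{1}+(-1)^{k-1}k\,e_{k},\qquad p_{1}=e_{1},
\]
valid for every $k\ge1$. The crucial structural feature is that this expresses $p_{k}$ as a polynomial with integer coefficients in $e_{1},\dots,e_{k}$ and the earlier $p_{j}$. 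Hence, if $F$ is any field containing $\mathbb{Z}$ and all $e_{j}\in F$, then a straightforward induction on $k$ gives $p_{k}\in F$ for every $k$. Taking $F=\mathbb{Q}$ and $F=\mathbb{Q}(q_{1},\dots,q_{\ell})$ proves the first two assertions, and taking $F=\mathbb{Q}((q),a_{1},\dots,a_{r},b_{1},\dots,b_{s})$ proves the (basic) hypergeometric claim, since for such $f$ the Taylor coefficients are built from $(q\text{-})$Pochhammer symbols, which are polynomials (respectively $q$-polynomials) in the parameters, so each $e_{k}$ lies in that field.

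Finally, the itemized examples follow by reading off $e_{k}$ from the explicit expansions already recorded and checking field membership: from \eqref{eq:2.11} one has $e_{k}=\pi^{2k}/(2k+1)!\in\mathbb{Q}(\pi^{2})$ for (1); from \eqref{eq:2.12}, $e_{k}=1/(k!\,2^{2k}(\nu+1)_{k})\in\mathbb{Q}(\nu)$ for (2); from \eqref{eq:2.19} and \eqref{eq:2.21} the coefficients lie in $\mathbb{Q}(q,q^{\nu})$ and $\mathbb{Q}(q)$ for (4) and (5); and from \eqref{eq:2.3} and \eqref{eq:2.9} one has $e_{k}=(b_{2k}/b_{0})/(2k)!$, giving (6) and (7). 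The one genuinely computational case is (3): here $e_{k}=a_{k}$ with $a_{k}$ as in \eqref{eq:2.16} and \eqref{eq:2.18}, and I expect the main obstacle to be showing that the factors $16^{k/3}$, $\sqrt[3]{4}$, and the ratios of $\Gamma$-values at arguments in $\tfrac13\mathbb{Z}+\{\tfrac16,\tfrac12\}$ collapse into $\mathbb{Q}(\sqrt{3},\pi,\Gamma(1/3))$. This is handled by the $\Gamma$ recurrence, the reflection formula $\Gamma(\tfrac16)\Gamma(\tfrac56)=2\pi$, and the special value $\Gamma(\tfrac16)=2^{-1/3}\sqrt{3/\pi}\,\Gamma(\tfrac13)^{2}$, which together force the offending cube roots of $2$ to cancel; the appearance of $\sqrt{3}$ in the field is exactly the residue of these reductions.
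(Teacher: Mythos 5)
Your proposal is correct and follows essentially the same route as the paper: Newton's identities expressing $p_{k}$ as an integer polynomial in $e_{1},\dots,e_{k}$ and earlier $p_{j}$, followed by induction to propagate field membership, with the examples read off from the explicit expansions \eqref{eq:2.11}, \eqref{eq:2.12}, \eqref{eq:2.16}--\eqref{eq:2.18}, \eqref{eq:2.19}, \eqref{eq:2.21}, \eqref{eq:2.3}, \eqref{eq:2.9}. The only cosmetic difference is that you derive the identities from the logarithmic derivative of the infinite product, whereas the paper (Lemma \ref{lem:4.1}) obtains them by truncating to $N$ variables and passing to the limit; your $\Gamma(\tfrac{1}{6})$ reduction in example (3) matches the paper's intent (and in fact your stated value $\Gamma(\tfrac16)=2^{-1/3}\sqrt{3/\pi}\,\Gamma(\tfrac13)^{2}$ is the correct one).
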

A positivity criterion for an absolutely summable sequence in terms
of power sums:
\begin{thm}
\label{thm:3.2} Assume that $\left\{ \lambda_{n}\right\} _{n=1}^{\infty}\in\mathbb{C}$
is an absolutely summable sequence such that not all of them are zeros.
Let
\begin{equation}
p_{k}={\displaystyle \sum_{n=1}^{\infty}}\lambda_{n}^{k},\ k\in\mathbb{N}.\label{eq:3.2}
\end{equation}
Then, $\left\{ \lambda_{n}\right\} _{n=1}^{\infty}$ is a sequence
of positive numbers if and only if for some $\lambda\ge\sup\left\{ \left|\lambda_{n}\right|\bigg|\ n\in\mathbb{N}\right\} $,
\begin{equation}
\ensuremath{(-1)^{j}\Delta^{j}\left(\frac{p_{k+1}}{\lambda^{k+1}}\right)\ge0},\quad j,k\in\mathbb{N}_{0},\label{eq:3.3}
\end{equation}
where $\Delta m_{n}=m_{n+1}-m_{n}$.

\end{thm}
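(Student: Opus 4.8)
The plan is to read the quantities in \eqref{eq:3.3} as the iterated forward differences of the rescaled power sums $m_{k}:=p_{k+1}/\lambda^{k+1}$ $(k\in\mathbb{N}_{0})$, and to recognize \eqref{eq:3.3} as the statement that $\{m_{k}\}_{k\ge0}$ is a \emph{completely monotone} sequence, which is exactly the hypothesis of the Hausdorff moment problem. Writing $t_{n}:=\lambda_{n}/\lambda$, so that $|t_{n}|\le1$, a term-by-term expansion gives $m_{k}=\sum_{n=1}^{\infty}t_{n}^{\,k+1}$, the series converging absolutely because $\sum_{n}|t_{n}|<\infty$. Note the single extra power of $t_{n}$ produced by the normalization $p_{k+1}/\lambda^{k+1}$: it is what will make the relevant measure have finite total mass. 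For the necessity direction I would assume each $\lambda_{n}\ge0$, so that $t_{n}\in[0,1]$ and $m_{k}=\int_{0}^{1}t^{k}\,d\mu(t)$ with $\mu:=\sum_{n}t_{n}\delta_{t_{n}}$ a finite positive measure on $[0,1]$ (finite since $\mu([0,1])=\sum_{n}t_{n}=p_{1}/\lambda<\infty$). The elementary identity $(-1)^{j}\Delta^{j}m_{k}=\int_{0}^{1}t^{k}(1-t)^{j}\,d\mu(t)\ge0$ then yields \eqref{eq:3.3} immediately.

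For the sufficiency direction I would start from \eqref{eq:3.3}, which asserts that $\{m_{k}\}$ is completely monotone. By the solution of the Hausdorff moment problem there is a unique finite positive Borel measure $\nu$ on $[0,1]$ with $m_{k}=\int_{0}^{1}t^{k}\,d\nu(t)$ for all $k\ge0$. On the other hand, introducing the finite complex measure $\sigma:=\sum_{n=1}^{\infty}t_{n}\delta_{t_{n}}$, supported in the closed unit disk with total variation at most $\sum_{n}|t_{n}|<\infty$, the definition of $m_{k}$ reads $\int z^{k}\,d\sigma(z)=m_{k}=\int z^{k}\,d\nu(z)$. By linearity $\int P\,d\sigma=\int P\,d\nu$ for every polynomial $P$. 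The goal is to upgrade this equality of holomorphic moments to the equality $\sigma=\nu$ of the two measures, after which the conclusion is immediate: $\nu$ is positive and supported on $[0,1]$, whereas $\sigma$ carries an atom of mass $a\cdot\#\{n:t_{n}=a\}\neq0$ at every value $a=t_{n_{0}}\neq0$, so no $t_{n}$ can lie off $[0,1]$; hence every $t_{n}\in[0,1]$ and every $\lambda_{n}=\lambda t_{n}\ge0$.

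The heart of the matter, and the step I expect to be the main obstacle, is precisely the passage from agreement against polynomials to the equality $\sigma=\nu$: two measures on the disk are \emph{not} determined by their holomorphic moments $\int z^{k}$ alone, so genuine structural input is required. Here I would invoke Mergelyan's theorem. Set $K:=\overline{\{t_{n}:n\in\mathbb{N}\}}\cup[0,1]$; since $\sum_{n}|t_{n}|<\infty$ the points $t_{n}$ accumulate only at $0\in[0,1]$, so $K$ is compact, has empty interior, and (being a segment together with a family of points isolated in $\mathbb{C}\setminus[0,1]$) has connected complement. Mergelyan's theorem then guarantees that polynomials are uniformly dense in $C(K)$. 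Approximating an arbitrary $g\in C(K)$ by polynomials $P_{j}\to g$ uniformly on $K$ and using $|\sigma|(K),\,\nu(K)<\infty$ to pass to the limit in $\int P_{j}\,d\sigma=\int P_{j}\,d\nu$ gives $\int g\,d\sigma=\int g\,d\nu$ for all $g\in C(K)$; the Riesz representation theorem then delivers $\sigma=\nu$, completing the argument.

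Two technical points deserve care. First, the conclusion this argument actually produces is that every $\lambda_{n}\ge0$; since vanishing terms are invisible to the power sums $p_{k}$, the statement is to be understood with zero terms excluded (or with ``positive'' read as nonnegative). Second, an alternative to the Mergelyan step is available through generating functions: for $|w|<1$ one has $\sum_{k\ge0}m_{k}w^{k}=\sum_{n=1}^{\infty}\frac{t_{n}}{1-t_{n}w}=\int_{0}^{1}\frac{d\nu(t)}{1-tw}$, where the right-hand side is holomorphic on $\mathbb{C}\setminus[1,\infty)$ while the middle expression is the meromorphic function $\lambda^{-1}(-f'/f)(w/\lambda)$ with poles exactly at $w=1/t_{n}$; matching the two forces every pole $1/t_{n}$ into $[1,\infty)$, that is, every $t_{n}\in(0,1]$. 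I would present the Mergelyan route as primary, consistent with the tools announced in the abstract, and regard the identification $\sigma=\nu$ as the decisive step.
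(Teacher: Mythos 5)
Your proof is correct, and while it uses the same two main ingredients as the paper (the Hausdorff moment theorem to produce a positive measure $\nu$ on $[0,1]$ from the complete monotonicity hypothesis \eqref{eq:3.3}, and Mergelyan's theorem on a compact set with connected complement containing $[0,1]$ and the points $t_{n}=\lambda_{n}/\lambda$), the way you derive the contradiction in the sufficiency direction is genuinely different. The paper keeps the test functions quadratic ($\int f^{2}d\mu=\sum_{n}\lambda_{n}f(\lambda_{n})^{2}$ for real polynomials $f$), isolates a single offending zero $\gamma_{1}$ of maximal imaginary part (or maximal modulus among the negative terms), builds a $\delta$-dependent compact set $K_{\delta}$ of rectangles and small disks, and tests against the rational function $f_{\delta}(z)=\frac{1}{z-\gamma_{1}+2\delta}+\frac{1}{z-\overline{\gamma_{1}}+2\delta}$; the contradiction comes from an asymptotic blow-up of order $\delta^{-1}$ or $\delta^{-2}$ against an $\mathcal{O}(1)$ bound as $\delta\downarrow0$. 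You instead work linearly: the single fixed compact set $K=\overline{\{t_{n}\}}\cup[0,1]$ has empty interior and connected complement (the $t_{n}$ accumulate only at $0$), so Mergelyan gives density of polynomials in $C(K)$, the equality of moments upgrades to $\int g\,d\sigma=\int g\,d\nu$ for all $g\in C(K)$, and Riesz uniqueness yields $\sigma=\nu$ outright, after which positivity is read off from the atoms of $\sigma$. Your route avoids the paper's delicate asymptotic analysis and the $\delta$-dependent geometry, and it correctly flags the one real issue in the statement --- that the criterion cannot see vanishing terms, so ``positive'' must be read as ``nonnegative'' (or zero terms excluded); the paper's own argument has exactly the same limitation but does not acknowledge it. One small point worth making explicit if you write this up: the residual claim that $\sigma(\{a\})=a\cdot\#\{n:t_{n}=a\}$ is a \emph{nonzero} atom for $a\neq0$ uses that the multiplicity is finite, which follows from $\sum_{n}|t_{n}|<\infty$.
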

Taking $\lambda_{k}=\frac{1}{z_{k}(\chi,a)^{2}}$ in Theorem \ref{thm:3.2}
to get the following:
\begin{cor}
\label{cor:3.3} Given $m\in\mathbb{N}$, let $\chi(n)$ be an non-principal
real primitive character to modulus $m$ with parity $a$ such that
$\phi(t,\chi)\ge0,\quad t\in\mathbb{R}$. Then the generalized Riemann
hypothesis is true for $L(s,\chi)$ if and only if for some $\lambda\ge\sup\left\{ \frac{1}{\left|z_{k}(\chi)\right|^{2}}\bigg|\ k\in\mathbb{N}\right\} $,
\begin{equation}
\ensuremath{(-1)^{j}\Delta^{j}\left(\frac{p_{k+1}(\chi)}{\lambda^{k+1}}\right)\ge0},\quad j,k\in\mathbb{N}_{0},\label{eq:3.4}
\end{equation}
where 
\begin{equation}
p_{n}(\chi)=\sum_{k=1}^{\infty}\frac{1}{z_{k}(\chi)^{2n}}=\frac{-1}{(n-1)!}\frac{\partial^{n}}{\partial z^{n}}\left(\log\frac{\Xi(\sqrt{z},\chi)}{\Xi(0,\chi)}\right)\bigg|_{z=0},\quad n\in\mathbb{N},\label{eq:3.5}
\end{equation}
and
\begin{align}
p_{1}(\chi) & =\frac{b_{2}(\chi)}{2b_{0}(\chi)},\quad p_{2}(\chi)=\frac{3b_{2}^{2}(\chi)-b_{0}(\chi)b_{4}(\chi)}{12b_{0}^{2}(\chi)},\label{eq:3.6}\\
p_{3}(\chi) & =\frac{30b_{2}^{3}(\chi)-15b_{0}(\chi)b_{2}(\chi)b_{4}(\chi)+b_{0}^{2}(\chi)b_{6}(\chi)}{240b_{0}^{3}(\chi)},\label{eq:3.7}
\end{align}
\begin{equation}
p_{k}(\chi)=\frac{(-1)^{k-1}kb_{2k}(\chi)}{(2k)!b_{0}(\chi)}+\sum_{i=1}^{k-1}\frac{(-1)^{k-1+i}b_{2k-2i}(\chi)}{(2k-2i)!b_{0}(\chi)}p_{i}(\chi),\label{eq:3.8}
\end{equation}
 
\begin{equation}
p_{k}(\chi)=\sum_{\begin{array}{c}
r_{1}+r_{2}+\cdots+jr_{j}=k\\
r_{1}\ge0,\dots,r_{j}\ge0
\end{array}}(-1)^{k}\frac{k\left(r_{1}+\cdots+r_{j}-1\right)!}{r_{1}!r_{2}!\cdots r_{j}!}\prod_{i=1}^{j}\left(\frac{-b_{2i}(\chi)}{(2i)!b_{0}(\chi)}\right)^{r_{i}}.\label{eq:3.9}
\end{equation}

\end{cor}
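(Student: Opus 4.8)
The plan is to obtain Corollary~\ref{cor:3.3} as a direct specialization of Theorem~\ref{thm:3.2} to the sequence $\lambda_k=1/z_k(\chi)^2$, together with a short computation identifying the power sums. First I would verify that this sequence satisfies the hypotheses of Theorem~\ref{thm:3.2}. Since $\Xi(z,\chi)$ is an even entire function of genus $1$, the function $\Xi(\sqrt z,\chi)/\Xi(0,\chi)$ in \eqref{eq:2.10} is of genus $0$, so that its reciprocal zeros form an absolutely summable sequence, $\sum_{k=1}^\infty 1/|z_k(\chi)|^2<\infty$; moreover each $z_k(\chi)$ is finite and nonzero, so not all $\lambda_k$ vanish. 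Under this substitution $p_k=\sum_n\lambda_n^k=\sum_n 1/z_n(\chi)^{2k}=p_k(\chi)$, and Theorem~\ref{thm:3.2} asserts that $\{\lambda_k\}$ is a positive sequence if and only if \eqref{eq:3.4} holds for some admissible $\lambda$.

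The key equivalence to establish is that $\{\lambda_k\}$ is a sequence of positive numbers precisely when the generalized Riemann hypothesis holds for $L(s,\chi)$. By the listing convention in \eqref{eq:2.7} every $z_k(\chi)$ has positive real part, and $\Xi(it,\chi)>0$ for $t\in\mathbb R$ rules out zeros on the imaginary axis. Hence $\lambda_k=1/z_k(\chi)^2>0$ forces $z_k(\chi)^2$ to be a positive real, so $z_k(\chi)$ is real, and being of positive real part it is positive; conversely $z_k(\chi)>0$ gives $\lambda_k>0$. Therefore all $\lambda_k$ are positive if and only if all $z_k(\chi)$ are positive, which is exactly the generalized Riemann hypothesis for $L(s,\chi)$ as recorded after \eqref{eq:2.10}. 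Combining this with the previous paragraph yields the criterion \eqref{eq:3.4}.

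It remains to justify the computational formulas \eqref{eq:3.5}--\eqref{eq:3.9} for $p_n(\chi)$. Taking the logarithm of \eqref{eq:2.10} and expanding each factor gives $\log\bigl(\Xi(\sqrt z,\chi)/\Xi(0,\chi)\bigr)=\sum_n\log(1-z/z_n(\chi)^2)=-\sum_{k=1}^\infty(p_k(\chi)/k)\,z^k$, so the coefficient of $z^n$ equals $-p_n(\chi)/n$; reading off the $n$-th Taylor coefficient yields \eqref{eq:3.5}. For the remaining identities I would set $e_i=b_{2i}(\chi)/((2i)!\,b_0(\chi))$, which from \eqref{eq:2.9} are exactly the elementary symmetric functions of the $\lambda_k$, and then invoke the Newton--Girard recursion; rewritten through the $e_i$ this is precisely \eqref{eq:3.8}, and \eqref{eq:3.6}--\eqref{eq:3.7} are its first instances. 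Finally \eqref{eq:3.9} is the closed Waring form of the same relation, obtained by expanding the logarithm above and collecting the $z^k$ coefficient via the multinomial theorem, where the sign $(-1)^k$ and the factor $\prod_i(-b_{2i}(\chi)/((2i)!\,b_0(\chi)))^{r_i}$ arise from the alternating powers $(-e_i)^{r_i}$.

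The only point demanding genuine care is the squaring step in the second paragraph: one must use both the positive--real--part normalization of \eqref{eq:2.7} and the positivity $\Xi(it,\chi)>0$ to ensure that positivity of $\lambda_k$ is equivalent to positivity of $z_k(\chi)$, rather than merely to $z_k(\chi)$ being real; everything else is a routine verification of hypotheses and a standard symmetric-function computation.
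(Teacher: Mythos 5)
Your proposal is correct and follows exactly the route the paper intends: the paper derives Corollary~\ref{cor:3.3} simply by setting $\lambda_k=1/z_k(\chi)^2$ in Theorem~\ref{thm:3.2}, with the formulas \eqref{eq:3.5}--\eqref{eq:3.9} coming from Lemma~\ref{lem:4.5} and the Newton identities of Lemma~\ref{lem:4.1} applied to $e_i=b_{2i}(\chi)/((2i)!\,b_0(\chi))$ as read off from \eqref{eq:2.9}. You merely make explicit the details (absolute summability, the squaring step, and the identification of the $e_i$) that the paper leaves implicit.
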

Taking $\lambda_{k}=\frac{1}{z_{k}^{2}}$ in Theorem \ref{thm:3.2}
to get:
\begin{cor}
\label{cor:3.4} The Riemann hypothesis is valid if and only if for
some fixed $\lambda\ge\sup\left\{ \frac{1}{\left|z_{k}\right|^{2}}\bigg|\ k\in\mathbb{N}\right\} $,
\begin{equation}
\ensuremath{(-1)^{j}\Delta^{j}\left(\frac{p_{k+1}}{\lambda^{k+1}}\right)\ge0},\quad j,k\in\mathbb{N}_{0},\label{eq:3.10}
\end{equation}
where 
\begin{equation}
p_{n}=\sum_{k=1}^{\infty}\frac{1}{z_{k}^{2n}}=\frac{-1}{(n-1)!}\frac{\partial^{n}}{\partial z^{n}}\left(\log\frac{\Xi(\sqrt{z})}{\Xi(0)}\right)\bigg|_{z=0},\quad n\in\mathbb{N}\label{eq:3.11}
\end{equation}

\end{cor}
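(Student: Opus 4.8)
The plan is to obtain Corollary \ref{cor:3.4} as the direct specialization of Theorem \ref{thm:3.2} to the sequence $\lambda_n = 1/z_n^2$, where $z_1,z_2,\dots$ are the zeros of $\Xi$ with positive real part listed in \eqref{eq:2.1}. First I would check that the hypotheses of Theorem \ref{thm:3.2} are met. By \eqref{eq:2.3}--\eqref{eq:2.4} the function $f(z)=\Xi(\sqrt z)/\Xi(0)=\prod_{n=1}^{\infty}\bigl(1-z/z_n^2\bigr)=\prod_{n=1}^{\infty}(1-\lambda_n z)$ is an entire function of genus $0$, since $\Xi$ is even of genus $1$. Consequently its zeros $z_n^2$ satisfy $\sum_{n=1}^{\infty}1/|z_n^2|=\sum_{n=1}^{\infty}|\lambda_n|<\infty$, so $\{\lambda_n\}$ is absolutely summable; and since $\Xi$ has infinitely many finite zeros, each $\lambda_n=1/z_n^2$ is nonzero, so the sequence is not identically zero. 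Thus Theorem \ref{thm:3.2} applies verbatim, with $p_k=\sum_n\lambda_n^k=\sum_n z_n^{-2k}$ coinciding with the $p_k$ of \eqref{eq:3.11}, and with admissible $\lambda\ge\sup_n|\lambda_n|=\sup_n|z_n|^{-2}$, which is finite because $\lambda_n\to 0$.

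Next I would translate the positivity conclusion into the Riemann hypothesis. Theorem \ref{thm:3.2} says that \eqref{eq:3.10} holds for some admissible $\lambda$ if and only if $\{\lambda_n\}$ is a sequence of positive numbers. The key point is a symmetry argument: $\lambda_n=1/z_n^2>0$ forces $z_n^2$ to be a positive real number, hence $z_n$ to be real, and combined with $\Re(z_n)>0$ guaranteed by the enumeration in \eqref{eq:2.1} this yields $z_n>0$; conversely, if every $z_n$ is real then it is positive and $\lambda_n>0$. Therefore $\{\lambda_n\}$ is positive exactly when all the numbers in \eqref{eq:2.1} are positive, which, as noted immediately after \eqref{eq:2.4}, is equivalent to the Riemann hypothesis. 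Chaining this with the Theorem \ref{thm:3.2} equivalence gives the stated criterion.

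Finally I would record the logarithmic-derivative representation of $p_n$. Expanding the logarithm of \eqref{eq:2.3} gives $\log f(z)=\sum_n\log(1-\lambda_n z)=-\sum_{k=1}^{\infty}(p_k/k)z^k$, so the $n$-th Taylor coefficient of $\log f$ is $-p_n/n$; differentiating $n$ times and evaluating at $z=0$ yields $\partial_z^{\,n}\log f|_{z=0}=-(n-1)!\,p_n$, which is precisely \eqref{eq:3.11} with $f(z)=\Xi(\sqrt z)/\Xi(0)$. I expect no single hard step here, since Theorem \ref{thm:3.2} carries all the analytic content; the only points requiring care are the symmetry argument that upgrades reality of $z_n^2$ to positivity of $z_n$ (using that the zeros are enumerated with positive real part) and the observation that genus $0$ is exactly what supplies the absolute summability demanded by Theorem \ref{thm:3.2}.
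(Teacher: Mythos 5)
Your proposal is correct and follows essentially the same route as the paper, which simply obtains the corollary by taking $\lambda_{k}=1/z_{k}^{2}$ in Theorem \ref{thm:3.2} and reads off \eqref{eq:3.11} from the logarithmic-derivative formula \eqref{eq:4.10} of Lemma \ref{lem:4.5}. The extra details you supply (genus $0$ giving absolute summability, and the step from $z_{n}^{2}>0$ to $z_{n}>0$ via the enumeration with positive real part) are exactly the implicit checks the paper leaves to the reader.
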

\begin{equation}
p_{k}=\frac{(-1)^{k-1}kb_{2k}}{(2k)!b_{0}}+\sum_{i=1}^{k-1}\frac{(-1)^{k-1+i}b_{2k-2i}p_{i}}{(2k-2i)!b_{0}},\label{eq:3.12}
\end{equation}
\begin{equation}
p_{k}=\sum_{\begin{array}{c}
r_{1}+r_{2}+\cdots+jr_{j}=k\\
r_{1}\ge0,\dots,r_{j}\ge0
\end{array}}(-1)^{k}\frac{k\left(r_{1}+\cdots+r_{j}-1\right)!}{r_{1}!r_{2}!\cdots r_{j}!}\prod_{i=1}^{j}\left(-\frac{b_{2i}}{(2i)!b_{0}}\right)^{r_{i}}.\label{eq:3.13}
\end{equation}
The first 4 $p_{n}$s are:

\begin{align}
p_{1} & =\frac{b_{2}}{2b_{0}},\quad p_{2}=\frac{3b_{2}^{2}-b_{0}b_{4}}{12b_{0}^{2}},\quad p_{3}=\frac{30b_{2}^{3}-15b_{0}b_{2}b_{4}+b_{0}^{2}b_{6}}{240b_{0}^{3}},\label{eq:3.14}\\
p_{4} & =\frac{630b_{2}^{4}-420b_{0}b_{2}^{2}b_{4}+35b_{0}^{2}b_{4}^{2}+28b_{0}^{2}b_{2}b_{6}-b_{0}^{3}b_{8}}{10080b_{0}^{4}}.\label{eq:3.15}
\end{align}
 A positivity criterion for zeros of an entire function of genus $0$:
\begin{thm}
\label{thm:3.5} Assume that $f(z)$ is an entire function of genus
$0$ such that $f(0)=1$. Then $f(z)$ has only positive roots if
and only if for some positive number $\rho$ such that $0<\rho\le\inf\left\{ \left|z\right|\bigg|\ f(z)=0\right\} $,
and for all nonnegative integers $j,\:k$ we have
\begin{equation}
\frac{\partial^{j+k}}{\partial z^{j+k}}\left\{ \left(z-1\right)^{j}\frac{f'(\rho z)}{f(\rho z)}\right\} \bigg|_{z=0}\le0.\label{eq:3.16}
\end{equation}

\end{thm}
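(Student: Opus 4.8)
\emph{Proof proposal.} The plan is to reduce Theorem~\ref{thm:3.5} to Theorem~\ref{thm:3.2} by passing to the reciprocals of the zeros. Since $f(z)$ has genus $0$ and $f(0)=1$, the Hadamard factorization forces $f(z)=\prod_{n=1}^{\infty}(1-z/z_n)$, where $\{z_n\}$ are the nonzero roots listed with multiplicity and, by the genus-$0$ hypothesis, $\sum_{n=1}^{\infty}1/|z_n|<\infty$ (the empty-product case $f\equiv1$ being trivial, since then both sides of the equivalence hold vacuously). Setting $\lambda_n=1/z_n$ produces an absolutely summable sequence, and $f(z)$ has only positive roots if and only if every $z_n>0$, i.e.\ if and only if every $\lambda_n>0$. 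Thus it suffices to show that condition \eqref{eq:3.16} is precisely condition \eqref{eq:3.3} of Theorem~\ref{thm:3.2} for this sequence, under the correspondence $\lambda=1/\rho$.

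First I would expand the logarithmic derivative. From $\log f(w)=-\sum_{k=1}^{\infty}\frac{p_k}{k}w^k$ for $|w|$ small, differentiation gives $\frac{f'(w)}{f(w)}=-\sum_{n=0}^{\infty}p_{n+1}w^n$, where $p_k=\sum_n\lambda_n^k$ are exactly the power sums of \eqref{eq:3.2}. Replacing $w$ by $\rho z$ yields $g(z):=\frac{f'(\rho z)}{f(\rho z)}=\sum_{n=0}^{\infty}b_n z^n$ with $b_n=-p_{n+1}\rho^n$. Because $0<\rho\le\inf\{|z_n|\}$, all zeros of $f(\rho z)$ lie in $|z|\ge1$, so $g$ is analytic near $0$ and this power series legitimately computes every derivative at $z=0$.

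The technical heart is the operator identity
\[
\frac{\partial^{j+k}}{\partial z^{j+k}}\Big\{(z-1)^j g(z)\Big\}\Big|_{z=0}=(j+k)!\,(-1)^j\Delta^j b_k,\qquad j,k\in\mathbb{N}_0,
\]
valid for any power series $g(z)=\sum_n b_n z^n$. I would prove it by expanding $(z-1)^j=\sum_{i=0}^j\binom{j}{i}(-1)^{j-i}z^i$, reading off the coefficient of $z^{j+k}$ in $(z-1)^j g(z)$ as $\sum_{l=0}^j\binom{j}{l}(-1)^l b_{k+l}$, and recognizing this as $(-1)^j\Delta^j b_k$; the derivative then supplies the factor $(j+k)!$. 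Substituting $b_k=-p_{k+1}\rho^k$ turns the left side into $-(j+k)!\,(-1)^j\Delta^j(p_{k+1}\rho^k)$, so \eqref{eq:3.16} (the requirement that it be $\le0$) is equivalent, after dividing by the positive factor $(j+k)!$, to $(-1)^j\Delta^j(p_{k+1}\rho^k)\ge0$. Since $\rho>0$ is constant, multiplying the differenced sequence by $\rho$ preserves sign, so this is in turn equivalent to $(-1)^j\Delta^j(p_{k+1}\rho^{k+1})\ge0$, which is exactly \eqref{eq:3.3} with $\lambda=1/\rho\ge\sup\{|\lambda_n|\}$.

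With the two conditions identified, Theorem~\ref{thm:3.2} closes the argument: \eqref{eq:3.16} holds for all $j,k$ and some admissible $\rho$ if and only if $\{\lambda_n\}$, hence $\{z_n\}$, consists of positive numbers. I expect the main obstacle to be bookkeeping rather than depth: getting the combinatorial identity's sign and the stray power of $\rho$ to line up so that \eqref{eq:3.16} matches \eqref{eq:3.3} term for term, and confirming that the admissible ranges $0<\rho\le\inf|z_n|$ and $\lambda\ge\sup|\lambda_n|$ correspond bijectively under $\lambda=1/\rho$. One should also verify that the edge cases ($f\equiv1$, or zeros lying on the boundary circle $|z|=\rho$) do not disturb the derivative computation at $z=0$.
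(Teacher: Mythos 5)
Your proposal is correct and follows essentially the same route as the paper: reduce to Theorem~\ref{thm:3.2} via $\lambda_n=1/z_n$ (absolutely summable by the genus-$0$ hypothesis), expand $f'(\rho z)/f(\rho z)=-\sum_{n\ge0}p_{n+1}\rho^{n}z^{n}$, and convert the derivative condition \eqref{eq:3.16} into the finite-difference condition \eqref{eq:3.3} by the coefficient-extraction identity for $(z-1)^{j}g(z)$, which is exactly the paper's Lemma~\ref{lem:4.3}. The sign and $\rho$-power bookkeeping in your argument matches the paper's computation, with only a cosmetic difference in where the extra factor of $\rho$ is absorbed.
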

From Corollary \ref{cor:3.3} we get:
\begin{cor}
\label{cor:3.6} Given $m\in\mathbb{N}$, let $\chi(n)$ be an non-principal
real primitive character to modulus $m$ with parity $a$ such that
$\phi(t,\chi)\ge0,\quad t\in\mathbb{R}$. Then the generalized Riemann
hypothesis is true for $L(s,\chi)$ if and only if for some positive
number $\rho$ such that $0<\rho\le\inf\left\{ \left|z\right|\bigg|\ \Xi(z,\chi)=0\right\} $,
and for all nonnegative integers $j,\:k$ we have
\begin{equation}
\frac{\partial^{j+k}}{\partial z^{j+k}}\left\{ \left(z-1\right)^{j}\frac{\Xi'(\rho\sqrt{z},\chi)}{\sqrt{z}\Xi(\rho\sqrt{z},\chi)}\right\} \bigg|_{z=0}\le0.\label{eq:3.17}
\end{equation}

\end{cor}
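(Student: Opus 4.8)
The plan is to realize Corollary \ref{cor:3.6} as the instance of Theorem \ref{thm:3.5} obtained by feeding in the genus $0$ function
\[
f(z)=\frac{\Xi(\sqrt z,\chi)}{\Xi(0,\chi)},
\]
in exactly the way that Corollary \ref{cor:3.3} is the instance of Theorem \ref{thm:3.2} for the sequence $\lambda_{n}=1/z_{n}(\chi)^{2}$. First I would check that $f$ is an admissible input for Theorem \ref{thm:3.5}. The hypothesis $\phi(t,\chi)\ge0$ forces $\Xi(0,\chi)=b_{0}(\chi)>0$, so $f(0)=1$ is well defined; the even entire function $\Xi(\cdot,\chi)$ has genus $1$, hence $\sum_{n}|z_{n}(\chi)|^{-2}<\infty$, and therefore the product \eqref{eq:2.9}--\eqref{eq:2.10} exhibits $f(z)=\prod_{n=1}^{\infty}(1-z/z_{n}(\chi)^{2})$ as a genus $0$ entire function. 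I would also record that, because $\Xi(\cdot,\chi)$ is even, both $\Xi(\rho\sqrt z,\chi)$ and $\Xi'(\rho\sqrt z,\chi)/\sqrt z$ are genuinely single-valued and analytic near $z=0$, so the left-hand side of \eqref{eq:3.17} is meaningful.

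Next I would translate the statement of the generalized Riemann hypothesis. The zeros of $f$ are exactly the numbers $z_{n}(\chi)^{2}$. Each $z_{n}(\chi)$ lies in the strip $|\Im(w)|<\tfrac12$ and has positive real part, so $z_{n}(\chi)^{2}$ is a positive real number precisely when $z_{n}(\chi)$ is itself real (and then positive), while a nonreal $z_{n}(\chi)=a+bi$ with $a>0$ has $\Im(z_n(\chi)^{2})=2ab\ne0$ and hence a non-positive square. Consequently $f$ has only positive roots if and only if every $z_{n}(\chi)$ is real, which by the discussion around \eqref{eq:2.7}--\eqref{eq:2.8} is exactly the generalized Riemann hypothesis for $L(s,\chi)$.

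The core of the argument is then the change of variables matching the two infima and the two bracketed expressions. The roots of $f$ have moduli $|z_{n}(\chi)|^{2}$, so $\inf\{|z|:f(z)=0\}=(\inf\{|w|:\Xi(w,\chi)=0\})^{2}$; hence a constant $\rho$ with $0<\rho\le\inf\{|w|:\Xi(w,\chi)=0\}$ corresponds bijectively, via $\rho_{f}=\rho^{2}$, to an admissible parameter $\rho_{f}$ for Theorem \ref{thm:3.5}. A one-line chain-rule computation gives
\[
\frac{f'(w)}{f(w)}=\frac{\Xi'(\sqrt w,\chi)}{2\sqrt w\,\Xi(\sqrt w,\chi)},
\]
so substituting $w=\rho^{2}z$ (whence $\sqrt w=\rho\sqrt z$) yields
\[
\frac{f'(\rho^{2}z)}{f(\rho^{2}z)}=\frac{1}{2\rho}\cdot\frac{\Xi'(\rho\sqrt z,\chi)}{\sqrt z\,\Xi(\rho\sqrt z,\chi)}.
\]
Since $\tfrac{1}{2\rho}>0$ is a constant and the operator $u\mapsto\frac{\partial^{j+k}}{\partial z^{j+k}}\{(z-1)^{j}u\}|_{z=0}$ is linear, the inequalities \eqref{eq:3.16} for $f$ with parameter $\rho^{2}$ hold for all $j,k$ if and only if the inequalities \eqref{eq:3.17} hold for all $j,k$. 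Combining this equivalence with Theorem \ref{thm:3.5} and the reformulation of the generalized Riemann hypothesis from the previous step completes the argument.

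The main obstacle is essentially nonexistent here, since Theorem \ref{thm:3.5} does all the analytic work; the delicate points are purely bookkeeping. I would be most careful about two items: (i) the single-valuedness and analyticity at $z=0$ of the square-root expressions, so that the derivatives in \eqref{eq:3.17} are legitimate, and (ii) confirming that passing to $\rho_{f}=\rho^{2}$ and discarding the positive factor $\tfrac{1}{2\rho}$ alters neither the admissible range of the parameter nor the direction of the inequalities. Both become routine once the identities above are in place.
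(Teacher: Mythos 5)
Your proof is correct and takes essentially the same route as the paper: the paper deduces Corollary \ref{cor:3.6} from Corollary \ref{cor:3.3} (i.e.\ Theorem \ref{thm:3.2} together with Lemma \ref{lem:4.3} applied to $f(z)=\Xi(\sqrt{z},\chi)/\Xi(0,\chi)$), which is exactly the content of your direct application of Theorem \ref{thm:3.5} to that function. Your bookkeeping --- the substitution $\rho_{f}=\rho^{2}$, the harmless positive factor $\tfrac{1}{2\rho}$, and the single-valuedness of $\Xi(\rho\sqrt{z},\chi)$ and $\Xi'(\rho\sqrt{z},\chi)/\sqrt{z}$ at $z=0$ --- is accurate.
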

From Corollary \ref{cor:3.4} we get:
\begin{cor}
\label{cor:3.7}The Riemann hypothesis holds if and only if for some
positive number $\rho$ such that $0<\rho\le\inf\left\{ \left|z\right|\bigg|\ \Xi(z)=0\right\} $,
and for all nonnegative integers $j,\:k$ we have
\begin{equation}
\frac{\partial^{j+k}}{\partial z^{j+k}}\left\{ \left(z-1\right)^{j}\frac{\Xi'(\rho\sqrt{z})}{\sqrt{z}\Xi(\rho\sqrt{z})}\right\} \bigg|_{z=0}\le0.\label{eq:3.18}
\end{equation}

\end{cor}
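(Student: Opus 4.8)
The plan is to obtain this corollary as the specialization of Theorem~\ref{thm:3.5} to the genus~$0$ entire function
\[
f(z)=\frac{\Xi(\sqrt z)}{\Xi(0)}=\prod_{n=1}^{\infty}\left(1-\frac{z}{z_{n}^{2}}\right),
\]
which is exactly the restatement, in logarithmic-derivative form, of the difference criterion of Corollary~\ref{cor:3.4} for the choice $\lambda_{n}=1/z_{n}^{2}$. The existence of $f$ as a genus~$0$ entire function, its normalization $f(0)=1$, and the product expansion are precisely \eqref{eq:2.3}--\eqref{eq:2.4}. First I would record that the roots of $f$ are exactly the numbers $z_{n}^{2}$, and that $f$ has only positive roots if and only if every $z_{n}$ is a positive real number: writing $z_{n}=a+bi$ with $a>0$ (each listed zero has positive real part), the condition $z_{n}^{2}>0$ forces $2ab=0$ and $a^{2}-b^{2}>0$, hence $b=0$ and $z_{n}=a>0$; the converse is immediate. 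By the equivalence recalled after \eqref{eq:2.4}, "all $z_{n}>0$" is the Riemann hypothesis, so "$f$ has only positive roots" and "RH holds" are interchangeable.

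Next I would feed $f$ into Theorem~\ref{thm:3.5}. This yields that RH holds if and only if there is $\rho'$ with $0<\rho'\le\inf\{|w|:f(w)=0\}=\inf_{n}|z_{n}|^{2}$ such that
\[
\frac{\partial^{j+k}}{\partial z^{j+k}}\left\{(z-1)^{j}\,\frac{f'(\rho' z)}{f(\rho' z)}\right\}\bigg|_{z=0}\le0,\qquad j,k\in\mathbb{N}_{0}.
\]
The one substantive computation is the logarithmic derivative. Since $\Xi$ is even, $\Xi'$ is odd and $\Xi'(\sqrt w)/\sqrt w$ is an entire function of $w$, so $f'/f$ is analytic at $w=0$ (where $f(0)=1\ne0$); the chain rule then gives
\[
\frac{f'(w)}{f(w)}=\frac{\Xi'(\sqrt w)}{2\sqrt w\,\Xi(\sqrt w)}.
\]

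Finally I would match the two criteria by a square-root rescaling. Setting $\rho=\sqrt{\rho'}$ and using the branch $0\le\arg z<2\pi$ of \eqref{eq:2.4}, we have $\sqrt{\rho' z}=\rho\sqrt z$, whence
\[
\frac{f'(\rho' z)}{f(\rho' z)}=\frac{1}{2\rho}\,\frac{\Xi'(\rho\sqrt z)}{\sqrt z\,\Xi(\rho\sqrt z)}.
\]
The constant $1/(2\rho)>0$ pulls out of the differential operator and leaves the sign of the inequality unchanged, so the displayed condition is exactly \eqref{eq:3.18}; moreover $\rho'\le\inf_{n}|z_{n}|^{2}$ is equivalent to $\rho\le\inf_{n}|z_{n}|=\inf\{|z|:\Xi(z)=0\}$, and $\rho'\mapsto\sqrt{\rho'}$ is a bijection of the admissible ranges, so the two existential statements coincide. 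I do not expect any deep obstacle here, since the result is a specialization; the only points that need care are the analyticity of $\Xi'(\sqrt w)/\sqrt w$ at the origin (which rests on the evenness of $\Xi$) and the correct bookkeeping of the square-root rescaling converting the bound $\inf_{n}|z_{n}|^{2}$ of Theorem~\ref{thm:3.5} into the bound $\inf_{n}|z_{n}|$ appearing here.
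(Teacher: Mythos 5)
Your proposal is correct and follows essentially the same route as the paper: the paper obtains Corollary~\ref{cor:3.7} by the very specialization you describe, namely applying Theorem~\ref{thm:3.5} (equivalently, Corollary~\ref{cor:3.4} together with Lemma~\ref{lem:4.3}) to the genus~$0$ function $f(z)=\Xi(\sqrt z)/\Xi(0)$ from \eqref{eq:2.3}--\eqref{eq:2.4}, exactly as it does for Corollary~\ref{cor:3.8}. Your explicit checks (analyticity of $\Xi'(\rho\sqrt z)/\sqrt z$ at the origin via evenness, the positive factor $1/(2\rho)$, and the bijection $\rho'\mapsto\sqrt{\rho'}$ between the admissible ranges) are all correct and in fact spell out details the paper leaves implicit.
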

Applying Theorem \ref{thm:3.5} to even entire functions of genus
at most $1$:
\begin{cor}
\label{cor:3.8} Let $G(z)$ be an even entire function of genus at
most $1$ such that it takes real values for real $z$, and $G(0)\neq0$.
If $G(z)$ has only real roots, then for all $c\in\mathbb{R}$ and
for all nonnegative integers $j,\,k$ we have

\begin{equation}
\frac{\partial^{j+k}}{\partial z^{j+k}}\left\{ \left(z-1\right)^{j}\frac{G'(\rho\sqrt{z}-ic)+G'(\rho\sqrt{z}+ic)}{\sqrt{z}\left(G(\rho\sqrt{z}-ic)+G(\rho\sqrt{z}+ic)\right)}\right\} \bigg|_{z=0}\le0,\label{eq:3.19}
\end{equation}
where $\rho$ is a fixed positive number such that $0<\rho<\inf\left\{ \left|z\right|\bigg|\ G(z)=0\right\} $. \end{cor}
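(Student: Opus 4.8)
The plan is to deduce the statement from Theorem \ref{thm:3.5}, applied to an auxiliary genus $0$ function built from the shifted combination
\[
\Phi(w)=G(w-ic)+G(w+ic).
\]
First I would record its elementary structure. Since $G$ is even, $\Phi(-w)=\Phi(w)$; since $G$ is real on $\mathbb{R}$ and $c\in\mathbb{R}$, we have $\overline{\Phi(w)}=\Phi(\bar w)$, so $\Phi$ is real on $\mathbb{R}$; and translation and summation preserve both order and genus, so $\Phi$ is again an even entire function of genus at most $1$. Moreover $\Phi(0)=2G(ic)\neq0$, because $G$ has no nonreal zeros and $G(0)\neq0$. Hence $f(z):=\Phi(\rho\sqrt{z})/\Phi(0)$ is a well-defined entire function (only even powers of $\rho\sqrt{z}$ occur), it has genus $0$ exactly as in \eqref{eq:2.4} and \eqref{eq:2.13}, and $f(0)=1$.

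The heart of the argument is to show that $f$ has only positive zeros, equivalently that $\Phi$ has only real zeros. Here I would invoke Hermite--Biehler theory. Take $c>0$ first and set $\omega(z)=G(z+ic)$; its zeros are the real zeros of $G$ translated by $-ic$, hence all lie in the open lower half-plane. Writing the Hadamard product $G(z)=G(0)\prod_{n}(1-z^{2}/r_{n}^{2})$ (which has this form since $G$ is even of genus $\le1$ with real zeros $\pm r_{n}$), a direct estimate gives, for $\Im z>0$,
\[
\left|\frac{G(z-ic)}{G(z+ic)}\right|=\prod_{n}\left|\frac{(r_{n}-z+ic)(r_{n}+z-ic)}{(r_{n}-z-ic)(r_{n}+z+ic)}\right|<1,
\]
each factor being $<1$ because replacing $+ic$ by $-ic$ strictly increases every squared distance in the denominator. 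Thus $\omega$ lies in the Hermite--Biehler class, and the classical consequence is that $\omega(z)+\omega^{*}(z)=G(z+ic)+G(z-ic)=\Phi(z)$ has only real zeros. The cases $c<0$ and $c=0$ follow by the symmetry $c\mapsto-c$ and trivially. Consequently $f(z)=\prod_{n}(1-\lambda_{n}z)$ with $\lambda_{n}=\rho^{2}/w_{\Phi,n}^{2}>0$, where $\pm w_{\Phi,n}$ are the real zeros of $\Phi$.

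Next I would match the two logarithmic derivatives. A short computation gives
\[
\frac{f'(z)}{f(z)}=\frac{\rho}{2}\cdot\frac{\Phi'(\rho\sqrt{z})}{\sqrt{z}\,\Phi(\rho\sqrt{z})}=\frac{\rho}{2}\cdot\frac{G'(\rho\sqrt{z}-ic)+G'(\rho\sqrt{z}+ic)}{\sqrt{z}\left(G(\rho\sqrt{z}-ic)+G(\rho\sqrt{z}+ic)\right)},
\]
so the bracketed quantity in \eqref{eq:3.19} is exactly $\tfrac{2}{\rho}\,f'(z)/f(z)$. Applying Theorem \ref{thm:3.5} to $f$ with its parameter equal to $1$ turns the criterion \eqref{eq:3.16} into
\[
\frac{\partial^{j+k}}{\partial z^{j+k}}\left\{(z-1)^{j}\frac{f'(z)}{f(z)}\right\}\bigg|_{z=0}\le0,
\]
and multiplying through by the positive constant $2/\rho$ reproduces \eqref{eq:3.19} verbatim. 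So once the hypotheses of Theorem \ref{thm:3.5} are in force, the desired inequalities are immediate; the positivity of the $\lambda_{n}$ established above is precisely what makes the Hausdorff-type condition of Theorem \ref{thm:3.2} (hence \eqref{eq:3.16}) hold.

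The main obstacle is the side condition of Theorem \ref{thm:3.5}: choosing its parameter to be $1$ requires $1\le\inf\{|z|:f(z)=0\}=\bigl(\inf_{n}|w_{\Phi,n}|/\rho\bigr)^{2}$, that is $\rho\le\inf\{|w|:\Phi(w)=0\}$. When $c=0$ one has $\Phi=2G$ and this is exactly the stated bound $0<\rho<\inf\{|z|:G(z)=0\}$. For general $c$, however, the translation $\pm ic$ redistributes the zeros, and the least zero modulus of $\Phi$ need not agree with that of $G$; pinning it down is the delicate step. I therefore expect the zero-localization estimate---tracking, through the Hermite--Biehler combination, how the zeros of $G(\cdot\pm ic)$ recombine into those of $\Phi$, and in particular verifying that $\rho$ remains below the smallest modulus zero of $\Phi$ (which is the quantity the criterion genuinely requires)---to be the crux of a complete proof, the remaining verifications above being routine.
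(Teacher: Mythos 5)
Your overall route is the same as the paper's: form $\Phi(w)=G(w-ic)+G(w+ic)$, check that it is even, real on $\mathbb{R}$, of genus at most $1$ with $\Phi(0)=2G(ic)\neq0$ and with only real zeros, pass to a genus-$0$ function of $z=(w/\rho)^{2}$ with positive zeros, and invoke Theorem \ref{thm:3.5}. The one substantive difference is the reality-of-zeros step: the paper simply cites a lemma from \cite{Gasper-2}, whereas you give a self-contained Hermite--Biehler argument via the factorwise estimate $|G(z-ic)|<|G(z+ic)|$ for $\Im z>0$. That argument is correct (for an even function of genus at most $1$ with real zeros the Hadamard product does reduce to $G(0)\prod_{n}(1-z^{2}/r_{n}^{2})$, evenness killing the exponential factor), and it is a perfectly good replacement for the citation. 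Your identification of the logarithmic derivative with the bracket in \eqref{eq:3.19} is also correct.

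The obstacle you flag at the end, however, is not something further work can remove: what Theorem \ref{thm:3.5} requires is $\rho\le\inf\{|w|:\Phi(w)=0\}$, this does \emph{not} follow from $\rho<\inf\{|z|:G(z)=0\}$, and in fact the corollary is false as stated (the paper's own proof silently steps over exactly the point you raise). Take $G(z)=(1-z^{2})(1-z^{2}/4)=1-\tfrac{5}{4}z^{2}+\tfrac{1}{4}z^{4}$, which satisfies every hypothesis with $\inf\{|z|:G(z)=0\}=1$. A direct computation gives, with $u=w^{2}$,
\begin{equation*}
\Phi(w)=\tfrac{1}{2}u^{2}-\left(\tfrac{5}{2}+3c^{2}\right)u+2+\tfrac{5}{2}c^{2}+\tfrac{1}{2}c^{4},
\end{equation*}
so for $c^{2}=0.3$ one has $\Phi(0)=2.795>0$ while $\Phi(1)=\tfrac{1}{2}-3.4+2.795=-0.105<0$: the smallest positive zero of $\Phi$ is $w_{1}=\sqrt{3.4-\sqrt{5.97}}\approx0.978<1$. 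Choosing $\rho=0.99$ (admissible under the corollary's hypothesis) makes $\lambda_{1}=\rho^{2}/w_{1}^{2}\approx1.0245>1$ and $\lambda_{2}=\rho^{2}/u_{+}\approx0.1677$, and the quantity controlling \eqref{eq:3.19} is $\sum_{i}\lambda_{i}^{k+1}(1-\lambda_{i})^{j}$, which at $j=k=1$ equals approximately $-0.0023<0$; hence \eqref{eq:3.19} fails for $j=k=1$. So the gap you isolated is genuine and unbridgeable: the hypothesis must be amended to $0<\rho\le\inf\{|w|:G(w-ic)+G(w+ic)=0\}$ (a $c$-dependent bound), after which your argument goes through verbatim.
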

\begin{rem}
Inequalities \eqref{eq:3.19} are valid for the following special
functions:
\begin{enumerate}
\item $G(z)=\frac{\sin\pi z}{\pi z}$; 
\item $G(z)=K_{iz}(a),\quad a>0$;
\item $G(z)=\frac{J_{\nu}(z)}{z^{\nu}},\quad\nu>-1$;
\item $G(z)=\frac{J_{\nu}^{(2)}(z;q)}{z^{\nu}},\quad q\in(0,1),\quad\nu>-1$;
\item $G(z)=\Xi(z)$ under the assumption of the Riemann hypothesis;
\item $G(z)=\Xi(z,\chi)$ under the assumptions of the generalized Riemann
hypothesis and \eqref{eq:2.6};
\item $G(z)=f(z)f(-z)$, where $f(z)$ is an entire function of genus at
most $1$, takes real values for real $z$, and $f(0)\neq0$. In particular,
for $G(z)=A(z)A(-z)$ and $G(z)=A_{q}(z)A_{q}(-z)$.
\end{enumerate}
\end{rem}

\section{Proofs }

The following lemma gives a recurrence and a closed formula to express
power sums $p_{k}$ in terms of multi-sums $e_{k}$.
\begin{lem}
\label{lem:4.1} Let $\left\{ \lambda_{n}\right\} _{n=1}^{\infty}$
be a sequence of complex numbers such that ${\displaystyle \sum_{n=1}^{\infty}}|\lambda_{n}|<\infty$.
Then both series 
\begin{equation}
p_{k}=\sum_{i=1}^{\infty}\lambda_{i}^{k},\quad e_{k}=\sum_{1\le j_{1}<\dots<j_{k}}\lambda_{j_{1}}\dots\lambda_{j_{k}}\label{eq:4.1}
\end{equation}
 converge absolutely, and they satisfy
\begin{equation}
p_{k}=(-1)^{k-1}ke_{k}+\sum_{i=1}^{k-1}(-1)^{k-1+i}e_{k-i}p_{i}\label{eq:4.2}
\end{equation}
 and
\begin{equation}
p_{k}=\sum_{\begin{array}{c}
r_{1}+r_{2}+\cdots+jr_{j}=k\\
r_{1}\ge0,\dots,r_{j}\ge0
\end{array}}(-1)^{k}\frac{k\left(r_{1}+\cdots+r_{j}-1\right)!}{r_{1}!r_{2}!\cdots r_{j}!}\prod_{i=1}^{j}\left(-e_{i}\right)^{r_{i}}.\label{eq:4.3}
\end{equation}
\end{lem}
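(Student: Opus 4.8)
The plan is to treat \eqref{eq:4.2} and \eqref{eq:4.3} as power-series identities, made rigorous by the analyticity of $f(z)=\prod_{n=1}^{\infty}(1-\lambda_{n}z)$ near the origin. First I would dispose of the convergence claims. Setting $M=\sup_{n}|\lambda_{n}|$, which is finite because $\sum_{n}|\lambda_{n}|<\infty$ forces $\lambda_{n}\to0$, the bound $|\lambda_{i}|^{k}\le M^{k-1}|\lambda_{i}|$ gives $\sum_{i}|\lambda_{i}|^{k}\le M^{k-1}\sum_{i}|\lambda_{i}|<\infty$, so each $p_{k}$ converges absolutely. For $e_{k}$ I would use $\sum_{j_{1}<\dots<j_{k}}|\lambda_{j_{1}}\cdots\lambda_{j_{k}}|\le\frac{1}{k!}\bigl(\sum_{i}|\lambda_{i}|\bigr)^{k}<\infty$, since each strictly increasing tuple injects into the full index set counted by the multinomial expansion of $\bigl(\sum_{i}|\lambda_{i}|\bigr)^{k}$. (If $M=0$ all statements are trivial, so I assume $M>0$.)

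Next I would record the two generating-function facts. Taking partial products $f_{N}(z)=\prod_{n=1}^{N}(1-\lambda_{n}z)=\sum_{k=0}^{N}(-1)^{k}e_{k}^{(N)}z^{k}$ and letting $N\to\infty$, uniform convergence on compact sets forces the Taylor coefficients to converge, so $f(z)=\sum_{k=0}^{\infty}(-1)^{k}e_{k}z^{k}$ with $e_{0}=1$; this is entire of genus $0$ with $f(0)=1$ and is nonvanishing on the disk $|z|<1/M$ (its zeros sit at $1/\lambda_{n}$, of modulus $\ge 1/M$). On that disk the logarithmic derivative can be expanded termwise, and interchanging the two absolutely convergent sums, justified by Tonelli applied to $\sum_{n}\sum_{k}|\lambda_{n}|^{k+1}|z|^{k}=\sum_{n}\frac{|\lambda_{n}|}{1-|\lambda_{n}||z|}\le\frac{1}{1-M|z|}\sum_{n}|\lambda_{n}|$, yields
\[
-\frac{f'(z)}{f(z)}=\sum_{n=1}^{\infty}\frac{\lambda_{n}}{1-\lambda_{n}z}=\sum_{k=1}^{\infty}p_{k}z^{k-1},\qquad\log f(z)=-\sum_{k=1}^{\infty}\frac{p_{k}}{k}z^{k}.
\]

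To obtain the recurrence \eqref{eq:4.2} I would clear the denominator, writing $f'(z)=-f(z)\sum_{m\ge1}p_{m}z^{m-1}$, substitute the two power series for $f'$ and $f$, and equate the coefficients of $z^{k-1}$; the Cauchy product produces exactly $(-1)^{k-1}ke_{k}=p_{k}-\sum_{i=1}^{k-1}(-1)^{k-1+i}e_{k-i}p_{i}$, which rearranges to the stated form. For the closed formula \eqref{eq:4.3} I would instead start from $\sum_{k\ge1}\frac{p_{k}}{k}z^{k}=-\log(1+w)$ with $w=f(z)-1=\sum_{j\ge1}(-1)^{j}e_{j}z^{j}$, expand $-\log(1+w)=\sum_{m\ge1}\frac{(-1)^{m}}{m}w^{m}$, and apply the multinomial theorem to $w^{m}$. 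Collecting the coefficient of $z^{k}$ groups the terms by composition data $r_{1},r_{2},\dots$ with $\sum_{i}ir_{i}=k$ and $R=\sum_{i}r_{i}=m$; using $\frac{R!}{R}=(R-1)!$ and consolidating the sign $(-1)^{R}\prod_{i}(-1)^{ir_{i}}=(-1)^{R+k}$, then multiplying through by $k$, reproduces \eqref{eq:4.3} after rewriting $(-1)^{R}\prod_{i}e_{i}^{r_{i}}=\prod_{i}(-e_{i})^{r_{i}}$.

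The algebra in the last two steps is classical, these being Newton's identities and Waring's formula; the only genuine obstacle is analytic rather than combinatorial, namely justifying each interchange of summation and the passage to the infinite product. I expect the delicate point to be confirming that all the power-series manipulations take place inside the common disk $|z|<1/M$, where $f$ is nonvanishing so that $\log f$ and $f'/f$ are analytic, so that equality of the analytic functions legitimately yields equality of every Taylor coefficient, and hence the identities \eqref{eq:4.2} and \eqref{eq:4.3} for all $k\in\mathbb{N}$.
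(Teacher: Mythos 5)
Your proof is correct, but it takes a genuinely different route from the paper's. The paper quotes the finite-variable Newton recurrence and Waring formula for $x_{1},\dots,x_{N}$ from Mead's note, verifies (with essentially your estimates) that $p_{k}^{(N)}\to p_{k}$ and $e_{k}^{(N)}\to e_{k}$ as $N\to\infty$, and then simply passes to the limit in the finite identities \eqref{eq:4.4} and \eqref{eq:4.5}, which is legitimate because for fixed $k$ each identity involves only finitely many terms. You instead re-derive both identities directly in the infinite setting from the generating function $f(z)=\prod_{n}(1-\lambda_{n}z)$: the recurrence \eqref{eq:4.2} by equating coefficients in $f'=-f\cdot\sum_{m}p_{m}z^{m-1}$, and the closed formula \eqref{eq:4.3} from the multinomial expansion of $-\log(1+w)$ with $w=f(z)-1$. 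What the paper's route buys is economy of analysis: once the termwise limits are in hand there is no complex analysis, no branch of $\log f$, and no interchange of infinite summations to justify. What your route buys is self-containedness (the finite identities are not invoked as a black box), and it meshes naturally with the paper's own Lemma \ref{lem:4.5}, which establishes exactly your two generating-function facts. One small point of care in your version: for the Waring step the disk $|z|<1/M$ is not quite enough, since the series $\sum_{m\ge1}\frac{(-1)^{m}}{m}w^{m}$ and its rearrangement into a power series in $z$ require $|w|<1$ with absolute convergence of the resulting double series; this holds on the smaller disk $|z|<(2S)^{-1}$, $S=\sum_{n}|\lambda_{n}|$, where $|w|\le S|z|/(1-S|z|)<1$, and since equality of Taylor coefficients only needs some neighborhood of the origin, this does not affect correctness.
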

\begin{proof}
Let $x_{1},x_{2},\dots,x_{n}\in\mathbb{C}$, \cite{Mead}, 
\[
p_{k}^{(n)}=p_{k}(x_{1},\dots,x_{n})=\sum_{i=1}^{n}x_{i}^{k},\quad k\in\mathbb{N},
\]
and 
\[
e_{k}^{(n)}=e_{k}(x_{1},\dots,x_{n})=\sum_{1\le j_{1}<\dots<j_{k}\le n}x_{j_{1}}\dots x_{j_{k}},\quad k\in\mathbb{N}_{0}
\]
with conventions 
\[
e_{k}^{(n)}=e_{k}(x_{1},\dots,x_{n})=0,\quad k>n.
\]
Then for $k,\,n\ge1$, we have the following recurrence \cite{Mead}
\begin{equation}
p_{k}^{(n)}=(-1)^{k-1}ke_{k}^{(n)}+\sum_{i=1}^{k-1}(-1)^{k-1+i}e_{k-i}^{(n)}p_{i}^{(n)}\label{eq:4.4}
\end{equation}
and the closed formula,
\begin{equation}
p_{k}^{(n)}=\sum_{\begin{array}{c}
r_{1}+r_{2}+\cdots+jr_{j}=k\\
r_{1}\ge0,\dots,r_{j}\ge0
\end{array}}(-1)^{k}\frac{k\left(r_{1}+\cdots+r_{j}-1\right)!}{r_{1}!r_{2}!\cdots r_{j}!}\prod_{i=1}^{j}\left(-e_{i}^{(n)}\right)^{r_{i}}.\label{eq:4.5}
\end{equation}
Since 
\[
\sum_{n=1}^{\infty}|\lambda_{n}|<\infty,\quad\sup_{i\ge1}\left|\lambda_{i}\right|<\infty,
\]
then, 
\[
\sum_{i=1}^{\infty}\left|\lambda_{i}^{k}\right|\le\left(\sup_{i\ge1}\left|\lambda_{i}\right|\right)^{k-1}\sum_{i=1}^{\infty}\left|\lambda_{i}\right|<\infty
\]
 and 
\[
\sum_{1\le j_{1}<\dots<j_{k}}\left|\lambda_{j_{1}}\dots\lambda_{j_{k}}\right|\le\left(\sum_{i=1}^{\infty}\left|\lambda_{i}\right|\right)^{k}<\infty.
\]
 For any $N\in\mathbb{N}$, let 
\[
p_{k}^{(N)}=\sum_{i=1}^{N}\lambda_{i}^{k},\quad e_{k}^{(N)}=\sum_{1\le j_{1}<\dots<j_{k}\le N}\lambda_{j_{1}}\dots\lambda_{j_{k}}.
\]
 Then we have
\[
\left|p_{k}-p_{k}^{(N)}\right|\le\sum_{i=N+1}^{\infty}\left|\lambda_{i}^{k}\right|\le\left(\sup_{i\ge N+1}\left|\lambda_{i}\right|\right)^{k-1}\sum_{i=N+1}^{\infty}\left|\lambda_{i}\right|
\]
 and
\[
\left|e_{k}-e_{k}^{(N)}\right|\le\left(\sum_{i=1}^{\infty}\left|\lambda_{i}\right|\right)^{k-1}\sum_{i=N+1}^{\infty}\left|\lambda_{i}\right|.
\]
Since
\[
\lim_{N\to\infty}\sum_{i=N+1}^{\infty}\left|\lambda_{i}\right|=\lim_{N\to\infty}\sup_{i\ge N+1}\left|\lambda_{i}\right|=0,
\]
then,
\[
\lim_{N\to\infty}p_{k}^{(N)}=p_{k},\quad\lim_{N\to\infty}e_{k}^{(N)}=e_{k}.
\]
From \eqref{eq:4.4} and \eqref{eq:4.5} we get 
\begin{equation}
p_{k}^{(N)}=(-1)^{k-1}ke_{k}^{(N)}+\sum_{i=1}^{k-1}(-1)^{k-1+i}e_{k-i}^{(N)}p_{i}^{(N)}\label{eq:4.6}
\end{equation}
and 
\begin{equation}
p_{k}^{(N)}=\sum_{\begin{array}{c}
r_{1}+r_{2}+\cdots+jr_{j}=k\\
r_{1}\ge0,\dots,r_{j}\ge0
\end{array}}(-1)^{k}\frac{k\left(r_{1}+\cdots+r_{j}-1\right)!}{r_{1}!r_{2}!\cdots r_{j}!}\prod_{i=1}^{j}\left(-e_{i}^{(N)}\right)^{r_{i}}.\label{eq:4.7}
\end{equation}
We observe that since $k$ here is a fixed positive integer, then
both of \eqref{eq:4.6} and \eqref{eq:4.7} are relations of finite
terms. Thus we can take limit $N\to\infty$ in these identities to
get \eqref{eq:4.2} and \eqref{eq:4.3} respectively. 
\end{proof}
The following lemma gives a method to express the power sums $p_{k}$
and multi-sums $e_{k}$ in terms of the logarithmic derivatives and
derivatives of their associated entire function $f(z)$ at $z=0$.
\begin{lem}
\label{lem:4.5} Let $\left\{ \lambda_{n}\right\} _{n=1}^{\infty}$
be an absolutely summable sequence of complex numbers and let 
\begin{equation}
f(z)=\prod_{n=1}^{\infty}\left(1-\lambda_{n}z\right).\label{eq:4.8}
\end{equation}
Then we have 
\begin{equation}
e_{k}=\frac{(-1)^{k}}{k!}\frac{\partial^{k}f(z)}{\partial z^{k}}\bigg|_{z=0},\quad k\in\mathbb{N}_{0}\label{eq:4.9}
\end{equation}
and

\begin{equation}
p_{k}=\frac{-1}{(k-1)!}\frac{\partial^{k}}{\partial z^{k}}\log f(z)\bigg|_{z=0},\quad k\in\mathbb{N}.\label{eq:4.10}
\end{equation}
\end{lem}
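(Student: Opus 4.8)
The plan is to identify the two families of constants with Taylor coefficients of $f$ and of $\log f$ at the origin, and then simply read off the stated derivative formulas. For \eqref{eq:4.9} I would first pass through the finite truncations $f_{N}(z)=\prod_{n=1}^{N}(1-\lambda_{n}z)$. Expanding this polynomial in elementary symmetric functions gives the exact identity $f_{N}(z)=\sum_{k=0}^{N}(-1)^{k}e_{k}^{(N)}z^{k}$, so that $f_{N}^{(k)}(0)=(-1)^{k}k!\,e_{k}^{(N)}$. Since $\sum_{n=1}^{\infty}|\lambda_{n}|<\infty$, the infinite product defining $f$ converges to an entire function and $f_{N}\to f$ uniformly on every compact subset of $\mathbb{C}$; by the Weierstrass convergence theorem the derivatives also converge, so $f_{N}^{(k)}(0)\to f^{(k)}(0)$. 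Combining this with the convergence $e_{k}^{(N)}\to e_{k}$ already established in Lemma \ref{lem:4.1} yields $f^{(k)}(0)=(-1)^{k}k!\,e_{k}$, which is precisely \eqref{eq:4.9}.

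For \eqref{eq:4.10} I would work in a disk $|z|<r$ with $r<1/\sup_{n}|\lambda_{n}|$, where $f(z)\neq0$ (its zeros sit at the points $1/\lambda_{n}$), so that $\log f(z)=\sum_{n=1}^{\infty}\log(1-\lambda_{n}z)$ defines an analytic branch with $\log f(0)=0$. Expanding each factor by the logarithmic series $\log(1-w)=-\sum_{k=1}^{\infty}w^{k}/k$ gives the double series $\log f(z)=-\sum_{n=1}^{\infty}\sum_{k=1}^{\infty}\lambda_{n}^{k}z^{k}/k$. The next step is to interchange the two summations and collect powers of $z$; the inner sum over $n$ then produces exactly $p_{k}=\sum_{n}\lambda_{n}^{k}$, so that $\log f(z)=-\sum_{k=1}^{\infty}(p_{k}/k)z^{k}$. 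Reading off the coefficient of $z^{k}$ gives $\frac{1}{k!}\frac{\partial^{k}}{\partial z^{k}}\log f(z)|_{z=0}=-p_{k}/k$, which rearranges to \eqref{eq:4.10}.

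The one step that needs genuine care is the interchange of summation used for $\log f$; everything else is bookkeeping. To justify it I would apply Fubini--Tonelli after checking absolute convergence of the double series on a disk $|z|\le\rho$ with $\rho<r$. Writing $M=\sup_{n}|\lambda_{n}|$ and choosing $s$ with $\rho M\le s<1$, the estimate $\sum_{k\ge1}|\lambda_{n}z|^{k}/k=-\log(1-|\lambda_{n}z|)\le|\lambda_{n}z|/(1-s)$ gives $\sum_{n}\sum_{k}|\lambda_{n}z|^{k}/k\le(1-s)^{-1}|z|\sum_{n}|\lambda_{n}|<\infty$ by absolute summability. Hence the rearrangement is legitimate and the inner sums defining $p_{k}$ converge absolutely, in agreement with Lemma \ref{lem:4.1}. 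This completes the identification of the coefficients and hence establishes both formulas.
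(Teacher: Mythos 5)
Your proposal is correct and follows essentially the same route as the paper: both prove \eqref{eq:4.9} by passing to the truncated products $\prod_{n=1}^{N}(1-\lambda_{n}z)=\sum_{k=0}^{N}(-1)^{k}e_{k}^{(N)}z^{k}$ and letting $N\to\infty$, and both prove \eqref{eq:4.10} from $\log f(z)=\sum_{n}\log(1-\lambda_{n}z)$ on a small disk where $f$ does not vanish. The only differences are cosmetic (you invoke the Weierstrass convergence theorem where the paper takes coefficient limits directly, and you expand the logarithms into power series and apply Fubini where the paper differentiates the sum term by term), and your justification of the interchange of summation is if anything more explicit than the paper's.
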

\begin{proof}
For any $N\in\mathbb{N}$, clearly,
\begin{equation}
\prod_{n=1}^{N}\left(1-\lambda_{n}z\right)=\sum_{k=0}^{N}(-1)^{k}e_{k}^{(N)}z^{k}.\label{eq:4.11}
\end{equation}
Since $\sum_{n=1}^{\infty}|\lambda_{n}|<\infty$, then for $z$ in
any compact subset of $\mathbb{C}$ we have
\[
\lim_{N\to\infty}\prod_{n=1}^{N}\left(1-\lambda_{n}z\right)=\prod_{n=1}^{\infty}\left(1-\lambda_{n}z\right)
\]
 uniformly. On the other hand, we observe that
\[
|e_{k}^{(N)}|\le\sum_{1\le j_{1}<\dots<j_{k}\le N}\left|\lambda_{j_{1}}\right|\cdot\dots\cdot\left|\lambda_{j_{k}}\right|\le\left(\sum_{i=1}^{\infty}\left|\lambda_{i}\right|\right)^{k},
\]
 hence,
\[
\left|e_{k}^{(N)}z^{k}\right|\le\left(|z|\cdot\sum_{i=1}^{\infty}\left|\lambda_{i}\right|\right)^{k}.
\]
For $|z|\le\left(\sum_{n=1}^{\infty}|\lambda_{n}|+1\right)^{-2}$,
the right hand side series of \eqref{eq:4.11} converges absolutely
and uniformly, then,
\[
f(z)=\lim_{N\to\infty}\sum_{k=0}^{N}(-1)^{k}e_{k}^{(N)}z^{k}=\sum_{k=0}^{\infty}(-1)^{k}e_{k}z^{k},
\]
and \eqref{eq:4.9} is proved. For $|z|\le\left(\sum_{n=1}^{\infty}|\lambda_{n}|+1\right)^{-2}$,
the entire function $f(z)=\prod_{n=1}^{\infty}\left(1-\lambda_{n}z\right)\ne0,$
and it converges uniformly and absolutely, then,
\[
\frac{\partial^{k}}{\partial z^{k}}\log f(z)=-(k-1)!\sum_{n=1}^{\infty}\frac{\lambda_{n}^{k}}{(1-\lambda_{n}z)^{k}}
\]
and \eqref{eq:4.10}follows. \end{proof}
\begin{rem}
Notice in the above proof, we only need the fact both series are convergent
in a neighborhood of $0$. But it is possible to show both series
converge on the punctured complex plane by considering their tails
instead.\end{rem}
\begin{lem}
\label{lem:4.3} Assume that the generating function $f(z)={\displaystyle \sum_{n=0}^{\infty}}a_{n}z^{n}$
for a complex sequence $\left\{ a_{n}\right\} _{n=0}^{\infty}$ is
analytic at $z=0$. Then for all nonnegative integers $j,\ n$ we
have 
\begin{equation}
\left(-\Delta\right)^{j}a_{n}=\frac{1}{\left(n+j\right)!}\frac{\partial^{j+n}}{\partial z^{j+n}}\left\{ \left(z-1\right)^{j}f(z)\right\} \bigg|_{z=0}.\label{eq:4.12}
\end{equation}
\end{lem}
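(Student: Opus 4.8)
The plan is to recognize both sides as the extraction of a single Taylor coefficient, after which the identity collapses to a binomial reindexing. Throughout I write $E$ for the shift operator $Ea_{n}=a_{n+1}$, so that $-\Delta=I-E$ and hence, by the binomial theorem for commuting operators,
\[
(-\Delta)^{j}=(I-E)^{j}=\sum_{i=0}^{j}\binom{j}{i}(-1)^{i}E^{i}.
\]
Applying this to $a_{n}$ exhibits the left-hand side as the finite sum $(-\Delta)^{j}a_{n}=\sum_{i=0}^{j}\binom{j}{i}(-1)^{i}a_{n+i}$. This step is purely formal and needs nothing beyond the definition $\Delta m_{n}=m_{n+1}-m_{n}$.

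For the right-hand side I would use that, since $f$ is analytic at $0$, one has $a_{m}=\frac{1}{m!}f^{(m)}(0)$, and more generally that for any $g$ analytic at $0$ the quantity $\frac{1}{N!}\frac{\partial^{N}}{\partial z^{N}}g(z)\big|_{z=0}$ is exactly the coefficient of $z^{N}$ in the Taylor series of $g$. Because $(z-1)^{j}f(z)$ is a product of the entire polynomial $(z-1)^{j}$ with the function $f$, it is again analytic at $0$, so the right-hand side of \eqref{eq:4.12} equals the coefficient of $z^{\,n+j}$ in $(z-1)^{j}f(z)$. Expanding $(z-1)^{j}=\sum_{i=0}^{j}\binom{j}{i}(-1)^{j-i}z^{i}$ (a finite sum, so term-by-term handling is unproblematic) and noting that multiplication by $z^{i}$ raises every index by $i$, the coefficient of $z^{\,n+j}$ in $z^{i}f(z)$ is $a_{n+j-i}$. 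Hence the right-hand side equals $\sum_{i=0}^{j}\binom{j}{i}(-1)^{j-i}a_{n+j-i}$.

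It then remains to match the two expressions: the substitution $i\mapsto j-i$ together with $\binom{j}{i}=\binom{j}{j-i}$ converts $\sum_{i=0}^{j}\binom{j}{i}(-1)^{j-i}a_{n+j-i}$ into $\sum_{i=0}^{j}\binom{j}{i}(-1)^{i}a_{n+i}$, which is precisely the expression already found for the left-hand side, completing the identification. The only analytic input required is the analyticity of $f$ at $0$, which guarantees both the existence of the $a_{n}$ and the legitimacy of evaluating derivatives of $(z-1)^{j}f(z)$ at $0$; everything else is finite combinatorics. I anticipate no genuine obstacle here, the sole point demanding care being the index bookkeeping in the reindexing step. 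As a cross-check one could argue instead by induction on $j$: writing $(z-1)^{j+1}f=(z-1)\cdot(z-1)^{j}f$ and using that multiplication by $(z-1)$ sends the coefficient of $z^{N}$ in a product to the coefficient of $z^{N-1}$ minus that of $z^{N}$, the inductive hypothesis applied at $n$ and at $n+1$ yields $(-\Delta)^{j}a_{n}-(-\Delta)^{j}a_{n+1}=(-\Delta)^{j+1}a_{n}$, which closes the induction.
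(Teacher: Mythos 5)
Your proof is correct and follows essentially the same route as the paper: both reduce the identity to the binomial expansion $(-\Delta)^{j}a_{n}=\sum_{i=0}^{j}\binom{j}{i}(-1)^{i}a_{n+i}$ and then match this against the coefficient of $z^{n+j}$ obtained by expanding $(z-1)^{j}$ and multiplying by $f(z)$. Your version is a little more economical — you extract the single Taylor coefficient directly instead of writing out the full Cauchy product and splitting off the terms with $n<j$ as the paper does, and you get the binomial expansion of $(-\Delta)^{j}$ from the operator identity $(I-E)^{j}$ rather than by induction — but the substance is identical and the index bookkeeping checks out.
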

\begin{proof}
First we observe that 
\begin{equation}
\left(-\Delta\right)^{j}a_{n}=\sum_{k=0}^{j}\binom{j}{k}(-1)^{k}a_{n+k}.\label{eq:4.13}
\end{equation}
Clearly, it is true for $j=0$. Assume that \eqref{eq:4.13} holds
for any $j\ge0$, then,
\begin{eqnarray*}
\left(-\Delta\right)^{j+1}a_{n} & = & \sum_{k=0}^{j}\binom{j}{k}(-1)^{k}a_{n+k}-\sum_{k=0}^{j}\binom{j}{k}(-1)^{k}a_{n+k+1}\\
 & = & \sum_{k=0}^{j+1}\binom{j}{k}(-1)^{k}a_{n+k}+\sum_{k=1}^{j+1}\binom{j}{k-1}(-1)^{k}a_{n+k}\\
 & = & \sum_{k=0}^{j+1}\left\{ \binom{j}{k}+\binom{j}{k-1}\right\} (-1)^{k}a_{n+k}=\sum_{k=0}^{j+1}\binom{j+1}{k}(-1)^{k}a_{n+k}
\end{eqnarray*}
 and \eqref{eq:4.13} is proved by mathematical induction. 

Observe that 
\begin{eqnarray*}
\left(1-z\right)^{j}f(z) & = & \sum_{k=0}^{\infty}\binom{j}{k}(-1)^{k}z^{k}\sum_{\ell=0}^{\infty}a_{\ell}z^{\ell}=\sum_{n=0}^{\infty}z^{n}\sum_{k=0}^{\min\left\{ j,n\right\} }\binom{j}{k}(-1)^{k}a_{n-k}\\
 & = & \sum_{n=0}^{j-1}z^{n}\sum_{k=0}^{n}\binom{j}{k}(-1)^{k}a_{n-k}+\sum_{n=j}^{\infty}z^{n}\sum_{k=0}^{j}\binom{j}{j-k}(-1)^{k}a_{n-k}\\
 & = & \sum_{n=0}^{j-1}z^{n}\sum_{k=0}^{n}\binom{j}{k}(-1)^{k}a_{n-k}+(-1)^{j}\sum_{n=j}^{\infty}z^{n}\sum_{k=0}^{j}\binom{j}{k}(-1)^{k}a_{n-j+k}\\
 & = & \sum_{n=0}^{j-1}z^{n}\sum_{k=0}^{n}\binom{j}{k}(-1)^{k}a_{n-k}+(-z)^{j}\sum_{m=0}^{\infty}z^{m}\sum_{k=0}^{j}\binom{j}{k}(-1)^{k}a_{m+k}\\
 & = & \sum_{n=0}^{j-1}z^{n}\sum_{k=0}^{n}\binom{j}{k}(-1)^{k}a_{n-k}+(-z)^{j}\sum_{m=0}^{\infty}z^{m}\left(-\Delta\right)^{j}a_{m}.
\end{eqnarray*}
 Hence for $j,\,m\ge0$ we have
\[
\left(-\Delta\right)^{j}a_{m}=\frac{\left(-1\right)^{j}}{\left(m+j\right)!}\frac{\partial^{j+m}}{\partial z^{j+m}}\left\{ \left(1-z\right)^{j}f(z)\right\} \bigg|_{z=0}.
\]

\end{proof}

\subsection{Proof of Proposition \ref{prop:3.1}}

The main assertion of this proposition follows from formulas \eqref{eq:4.3}
and \eqref{eq:4.9}, the same conclusion can be drawn by applying
mathematical induction to the recurrence \eqref{eq:4.2}. 

Example 1 follows from applying the main assertion to \eqref{eq:2.11}
and $\lambda_{k}=\frac{1}{k^{2}}$, example 2 follows from \eqref{eq:2.12}
and $\lambda_{k}=\frac{1}{j_{\nu,k}^{2}}$. For example 3, we first
observe that $\mathbb{Q}\left(a_{0},a_{1},a_{2},\dots,\,a_{n}\right)=\mathbb{Q}\left(a_{0},a_{1},a_{2}\right),\quad n\ge2$
from \eqref{eq:2.18}, then we apply $\Gamma(x)\Gamma(1-x)=\frac{\pi}{\sin x}$
and $\Gamma(x+1)=x\Gamma(x)$ to \eqref{eq:2.18} to obtain 
\[
a_{0}=2\pi,a_{2}=\frac{2\pi^{2}}{9\sqrt{3}\Gamma^{2}(\frac{1}{3})},\ a_{1}=\frac{4\cdot2^{\frac{2}{3}}\pi^{\frac{5}{2}}}{\sqrt{3}\Gamma(\frac{1}{6})\Gamma^{2}(\frac{1}{3})}.
\]
By $\Gamma(z)\Gamma(z+1/2)=2^{1-2z}\Gamma(1/2)\Gamma(2z)$ we get
$\Gamma\left(\frac{1}{6}\right)=\frac{2^{\frac{2}{3}}\sqrt{\pi}\Gamma(\frac{1}{3})}{\Gamma(\frac{2}{3})}=\frac{2^{\frac{5}{3}}\Gamma^{2}(\frac{1}{3})}{\sqrt{3}\cdot\sqrt{\pi}},$
which gives $a_{1}=\frac{2\pi^{3}}{\Gamma^{4}(\frac{1}{3})}$. Now
it is easy to see that $a_{n}\in\mathbb{Q}\left(\sqrt{3},\pi,\Gamma\left(\frac{1}{3}\right)\right),\quad n\ge0$,
and example 3 follows from $\lambda_{k}=\frac{1}{i_{k}}$ and \eqref{eq:2.16}.
Example 4 follows from \eqref{eq:2.19} and $\lambda_{k}=\frac{1}{j_{\nu,k}(q)}$,
example 5 follows from \eqref{eq:2.21} and $\lambda_{k}=\frac{1}{i_{k}(q)}$,
example 6 follows from \eqref{eq:2.3} and $\lambda_{k}=\frac{1}{z_{k}}$,
example 7 follows from \eqref{eq:2.9} and $\lambda_{k}=\frac{1}{z_{k}(\chi)}$.

\subsection{Proof of Theorem \ref{thm:3.2}}

By considering $\frac{\lambda_{n}}{\lambda}$ we may assume that $\lambda=1$
and $\sup\left\{ \left|\lambda_{n}\right|:\ n\in\mathbb{N}\right\} \le1$.
Let $\left\{ \lambda_{n}\right\} _{n=1}^{\infty}$ be a sequence of
positive numbers such that ${\displaystyle 0<\sum_{n=1}^{\infty}}\lambda_{n}<\infty$,
we define 
\begin{equation}
\mu(x)=\sum_{n=1}^{\infty}\lambda_{n}\delta\left(x-\lambda_{n}\right),\label{eq:4.14}
\end{equation}
then its moments are given by 
\begin{equation}
m_{k}=\int_{0}^{1}x^{k}d\mu(x)=\sum_{n=1}^{\infty}\lambda_{n}^{k+1}=p_{k+1},\quad k\in\mathbb{N}_{0}.\label{eq:4.15}
\end{equation}
Thus,
\[
\ensuremath{(-1)^{j}\Delta^{j}m_{k}=\int_{0}^{1}x^{k}(1-x)^{j}d\mu(x)\ge0,}
\]
and the assertion \eqref{eq:3.3} is proved. 

Assume \eqref{eq:3.3} holds, then the Hausdorff moment problem \eqref{eq:4.15}
is solvable\cite{Akhiezer,Shohat}, that is, there is a positive measure
$\mu(x)$ on the interval $[0,1]$ satisfying \eqref{eq:4.15}. Then
for $k\in\mathbb{N}_{0}$ and $f(x)={\displaystyle \sum_{j=0}^{k}}c_{j}x^{j},\quad c_{0},c_{1},\dots,c_{k}\in\mathbb{R}$
we have
\[
\int_{0}^{1}f(x)^{2}d\mu(x)=\sum_{i,j=0}^{k}m_{i+j}c_{i}c_{j}=\sum_{i,j=0}^{k}p_{i+j+1}c_{i}c_{j}\ge0.
\]
Since
\[
\sum_{i,j=0}^{k}p_{i+j+1}c_{i}c_{j}=\sum_{n=1}^{\infty}\lambda_{n}\left(f\left(\lambda_{n}\right)\right)^{2},
\]
then,
\[
\int_{0}^{1}f(x)^{2}d\mu(x)=\sum_{n=1}^{\infty}\lambda_{n}\left(f\left(\lambda_{n}\right)\right)^{2}.
\]
If $\left\{ \lambda_{n}\right\} _{n=1}^{\infty}$ is not a sequence
of positive numbers, then at least one of the $\lambda_{n}$s are
negative or with nonzero imaginary parts. Let us eliminate the complex
case first. Assume there are at least one $\lambda_{n}$ with nonzero
imaginary parts, Since ${\displaystyle \sum_{n=1}^{\infty}}\left|\lambda_{n}\right|<\infty$,
then for any $\epsilon>0$ there exist finitely many of them having
imaginary parts bigger or equal to $\epsilon$ in absolute value.
Thus, there exist only finitely many of $\lambda_{n}$s with largest
imaginary parts in absolute value, and they are on one or two horizontal
lines. We list them and their complex conjugates into distinct pairs
as $\left\{ \gamma_{1},\overline{\gamma_{1}}\right\} ,\left\{ \gamma_{2},\overline{\gamma_{2}}\right\} ,\dots,\left\{ \gamma_{n},\overline{\gamma_{N}}\right\} $.
Let
\[
a=\inf\left\{ \left|\rho_{j}-\rho_{k}\right|\bigg|\rho_{j}\neq\rho_{k},\ \rho_{j},\rho_{k}\in\cup_{j=1}^{N}\left\{ \gamma_{j},\overline{\gamma_{j}}\right\} ,\ \Im\rho_{j},\Im\rho_{k}>0\right\} >0,
\]
\[
b=\sup\left\{ \left|\Im z\right|\bigg|z\in\left\{ \lambda_{n}\right\} _{n=1}^{\infty}\backslash\left(\cup_{j=1}^{N}\left\{ \gamma_{j},\overline{\gamma_{j}}\right\} \right)\right\} \ge0,
\]
and 
\[
9d=\min\left\{ a,\left|\Im\gamma_{1}\right|-b\right\} >0.
\]
For any $0<\delta<d$, let 
\[
K_{\delta}=R_{d}\cup_{j=1}^{N}\left(D_{\delta}(\gamma_{j})\cup D_{\delta}(\overline{\gamma_{j})}\right),
\]
where $R_{d}$ is the closed rectangle passing through $\left(-1-d,0\right),\ \left(1+d,0\right),\ \left(0,(b+d)i\right),\ \left(0,-(b+d)i\right)$,
and $D_{\delta}(\gamma_{j})$ and $D_{\delta}(\overline{\gamma_{j}})$
are the closed disks with radius $\delta$ centered at $\gamma_{j}$
and $\overline{\gamma_{j}}$ respectively. Clearly, $K_{\delta}$
is compact, symmetric with respect to real axis, it contains $\left\{ \lambda_{n}\right\} _{n=1}^{\infty}\cup[0,1]$.
It is also clear that $\mathbb{C\backslash}K_{\delta}$ is connected.
Then the function 
\[
f_{\delta}(z)=\frac{1}{z-\gamma_{1}+2\delta}+\frac{1}{z-\overline{\gamma_{1}}+2\delta}
\]
is continuous on $K_{\delta}$ and holomorphic in the interior of
$K_{\delta}$. By the Mergelyan's theorem, \cite{Carleson,Vitushkin},
it can be approximated uniformly on $K_{\delta}$ with polynomials
$p_{M}^{(\delta)}(z)$. Since $K_{\delta}$ is invariant under complex
conjugation, and the function satisfies $f_{\delta}(z)=\overline{f_{\delta}\left(\overline{z}\right)}$,
we may take $p_{M}^{(\delta)}(z)$ with only real coefficients, that
is, $p_{M}^{(\delta)}(z)=\overline{p_{M}^{(\delta)}\left(\overline{z}\right)}$.
Then we have
\begin{equation}
\int_{0}^{1}p_{M}^{(\delta)}(x)^{2}d\mu(x)=\sum_{n=1}^{\infty}\lambda_{n}\left(p_{M}^{(\delta)}(\lambda_{n})\right)^{2}.\label{eq:4.16}
\end{equation}
Let $M\to\infty$ in \eqref{eq:4.16} to get
\[
\int_{0}^{1}f_{\delta}(x)^{2}d\mu(x)=\sum_{n=1}^{\infty}\lambda_{n}\left(f_{\delta}\left(\lambda_{n}\right)\right)^{2}.
\]
Now we observe that
\[
\Re\left\{ \gamma_{1}\left(f_{\delta}\left(\gamma_{1}\right)\right)^{2}\right\} \asymp\begin{cases}
\frac{1}{2\delta},\  & \Re(\gamma_{1})=0,\\
\frac{\Re(\gamma_{1})}{4\delta^{2}},\  & \Re(\gamma_{1})\neq0,
\end{cases}
\]
and
\[
\left(f_{\delta}\left(\lambda_{n}\right)\right)^{2}=\mathcal{O}\left(1\right),\quad\lambda_{n}\neq\gamma_{1}
\]
as $\delta\downarrow0$. Let $m^{+}$ and $m^{-}$ be the multiplicities
of $\gamma_{1}$ and $\overline{\gamma_{1}}$ in the sequence $\left\{ \lambda_{n}\right\} _{n=1}^{\infty}$
, then 
\[
\left(m^{+}+m^{-}\right)\Re\left\{ \gamma_{1}\left(f_{\delta}\left(\gamma_{1}\right)\right)^{2}\right\} =\Re\left\{ \int_{0}^{1}f_{\delta}(x)^{2}d\mu(x)-\sum_{\lambda_{n}\neq\gamma_{1},\overline{\gamma_{1}}}^{\infty}\lambda_{n}\left(f_{\delta}\left(\lambda_{n}\right)\right)^{2}\right\} .
\]
Hence, 
\[
\left|\Re\left\{ \gamma_{1}\left(f_{\delta}\left(\gamma_{1}\right)\right)^{2}\right\} \right|=\mathcal{O}\left(\int_{0}^{1}d\mu(x)+\sum_{\lambda_{n}\neq\gamma_{1},\overline{\gamma_{1}}}^{\infty}\left|\lambda_{n}\right|\right)=\mathcal{O}\left(1\right)
\]
as $\delta\downarrow0$, which is clearly impossible according to
the asymptotic behavior of the left hand side.

Having eliminated the complex case, we now prove that $\lambda_{n}$s
can not be negative either. Assume that there exist at least one negative$\lambda_{n}$,
since ${\displaystyle 0<\sum_{n=1}^{\infty}}\left|\lambda_{n}\right|<\infty$,
we must have one of them, say $\lambda_{k_{0}}$ of multiplicity $m_{0}\ge1$
with largest modulus. Let

\[
a=-\inf\left\{ \Re(z)\bigg|z\in\left\{ \lambda_{n}\right\} _{n=1}^{\infty}\backslash\left\{ \lambda_{k_{0}}\right\} \right\} \ge0\quad9d=\left|\lambda_{k_{0}}\right|-a>0.
\]
For $0<\delta<d$ we consider the continuous function $\frac{1}{z-\lambda_{k_{0}}+2\delta}$
and $K_{\delta}=R_{d}\cup D_{\delta}(\lambda_{k_{0}})$, where $R_{d}$
is the closed rectangle passing through $\left(-a-d,0\right),\ \left(1+d,0\right),\ \left(0,di\right),\ \left(0,-di\right)$,
and $D_{\delta}(\lambda_{k_{0}})$ is the closed disk with radius
$\delta$ centered at $\lambda_{k_{0}}$. The rest of the proof is
similar to the complex case.

\subsection{Proof of Theorem \ref{thm:3.5}}

Since $f(z)$ is an entire function of genus $0$ such that $f(0)=1$,
then $f(z)=\prod_{n=1}^{\infty}\left(1-\frac{z}{z_{n}}\right)$ where
$\left\{ z_{n}\right\} _{n=1}^{\infty}$ are the roots of $f(z)$.
Let $\lambda_{n}=\frac{1}{z_{n}}$ in Theorem \ref{thm:3.2}, then
the sequence $\left\{ z_{n}\right\} _{n=1}^{\infty}$ is a sequence
of positive number if and only if for some $\rho$ such that $0<\rho\le\inf\left\{ \left|z_{n}\right|\bigg|\ n\in\mathbb{N}\right\} $
we have 
\[
\ensuremath{(-1)^{j}\Delta^{j}\left(\rho^{k+1}p_{k+1}\right)\ge0},\quad j,k\in\mathbb{N}_{0},
\]
 where 
\[
p_{k}={\displaystyle \sum_{n=1}^{\infty}}\frac{1}{z_{n}^{k}},\quad k\in\mathbb{N},\quad\Delta m_{n}=m_{n+1}-m_{n}.
\]
 From 
\[
-\frac{\rho f'(\rho z)}{f(\rho z)}=\sum_{k=0}^{\infty}z^{k}\rho^{k+1}p_{k+1}
\]
 and \eqref{eq:4.12} we get
\[
(-1)^{j}\Delta^{j}\left(\rho^{k+1}p_{k+1}\right)=\frac{-\rho}{\left(k+j\right)!}\frac{\partial^{j+k}}{\partial z^{j+k}}\left\{ \left(z-1\right)^{j}\frac{f'(\rho z)}{f(\rho z)}\right\} \bigg|_{z=0}\ge0
\]
for any nonnegative integers $j,\ k$.

\subsection{Proof of Corollary \ref{cor:3.8}}

Since $G(z)$ is an even entire function of genus $0$ or $1$ which
satisfies $G(0)\neq0$ and takes real values for $z\in\mathbb{R}$.
Then for any fixe $c\in\mathbb{R}$, by applying the Lemma stated
in \cite{Gasper-2}, the entire function $G(z-ic)+G(z+ic)$ in variable
$z$ is also of the same type. Clearly, $G(ic)\neq0,\quad c\in\mathbb{R}$,
thus the function 
\[
f(z)=\frac{G(\sqrt{z}-ic)+G(\sqrt{z}+ic)}{2G(ic)}
\]
is an entire function of genus $0$ has only positive zeros. We apply
Theorem \ref{thm:3.5} to this $f(z)$ to get
\[
\frac{\partial^{j+k}}{\partial z^{j+k}}\left\{ \left(z-1\right)^{j}\frac{G'(\rho\sqrt{z}-ic)+G'(\rho\sqrt{z}+ic)}{\sqrt{z}\left(G(\rho\sqrt{z}-ic)+G(\rho\sqrt{z}+ic)\right)}\right\} \bigg|_{z=0}\le0,
\]
 where $c\in\mathbb{R}$ and $0<\rho<\inf\left\{ \left|z\right|\bigg|\ G(z)=0\right\} $.
\begin{acknowledgement*}
This work is partially supported by Chinese National Natural Science
Foundation grant No.11371294.\end{acknowledgement*}

\end{document}